\theoremstyle{plain}
\newtheorem{thm}{Theorem}[section]
\newtheorem{cor}[thm]{Corollary}
\newtheorem{lem}[thm]{Lemma}
\newtheorem{prop}[thm]{Proposition}
\newtheorem{conj}[thm]{Conjecture}
\newtheorem{qs}[thm]{Question}
\theoremstyle{definition}
\newtheorem{os}[thm]{Remark}
\def\CC{{\mathbb C}}
\def\NN{{\mathbb N}}
\def\QQ{{\mathbb Q}}
\def\ZZ{{\mathbb Z}}
\def\PP{{\mathbb P}}
\def \H{{\mathcal {H}}}
\def \O{{\mathcal {O}}}
\def \V{{\mathcal {V}}}
\def \Z{{\mathcal {Z}}}
\def \aa{{\mathfrak{a}}}
\def \mm{{\mathfrak{m}}}
\def \kk{{\bar{k}}}
\def \et{{\acute{e}t}}
\def\de{\partial}
\def\ara{\operatorname{ara}}
\def\cd{\operatorname{cd}}
\def\ecd{\operatorname{\acute{e}cd}}
\def\height{\operatorname{ht}}
\def\depth{\operatorname{depth}}
\def\Grass{\operatorname{Grass}}
\def\Spec{\operatorname{Spec}}
\def\Proj{\operatorname{Proj}}
\def\codim{\operatorname{codim}}
\def\Ker{\operatorname{Ker}}
\def\chara{\operatorname{char}}
\begin{document}

\title{On the Arithmetical Rank of Certain Segre Embeddings}
\author{Matteo Varbaro\\
\footnotesize Dipartimento di Matematica\\
\footnotesize Universit\`a degli Studi di Genova, Italy\\
\footnotesize \url{varbaro@dima.unige.it}}
\date{{\small \today}} 
\maketitle

%

\begin{abstract}
\noindent We study the number of (set-theoretically) defining equations of Segre products of projective spaces times certain projective hypersurfaces, extending results by Singh and Walther. Meanwhile, we prove some results about the cohomological dimension of certain schemes. In particular, we solve a conjecture of Lyubeznik about an inequality involving the cohomological dimension and the \'etale cohomological dimension of a scheme, in the characteristic-zero-case and under a smoothness assumption. Furthermore, we show that a relationship between depth and cohomological dimension discovered by Peskine and Szpiro in positive characteristic holds true also in characteristic-zero up to dimension three. 
\end{abstract}

\section{Introduction}

The beauty of find the number of defining equations of a variety is expressed by Lyubeznik in \cite{ly} as follows:

\emph{Part of what makes the problem about the number of defining equations so interesting is that it can be very easily stated, yet a solution, in those rare cases when it is known, usually is highly non trivial and involves a fascinating interplay of Algebra and Geometry.}

In this paper we study the number of defining equations, called arithmetical rank (see Section \ref{preliminaries}), of certain Segre products of two projective varieties. 
Let us list some works that there already exist in this direction.
\begin{compactenum}
\item In their paper \cite{bruns-schwanzl}, Bruns and Schw\"anzl studied the number of defining equations of a determinantal variety. In particular they proved that the Segre product $\PP^n \times \PP^m \subseteq \PP^N$, where $N=nm+n+m$, can be defined set-theoretically by $N-2$ homogeneous equations and not less. In particular it is a set-theoretic complete intersection if and only if $n=m=1$.
\item In their work \cite{siwa}, Singh and Walther gave a solution in the case of $E \times \PP^1 \subseteq \PP^5$ where $E$ is a smooth elliptic plane curve: the authors proved that the arithmetical rank of this Segre product is $4$. Later, in \cite{song}, Song proved that the arithmetical rank of $C \times \PP^1$, where $C \subseteq \PP^2$ is a Fermat curve (i.e. a curve defined by the equation $x_0^d+x_1^d+x_2^d$), is $4$. In particular both $E\times \PP^1$ and $C\times \PP^1$ are not set-theoretic complete intersections.  
\end{compactenum}
In light of these results it is natural to study the following problem.

\emph{Let $n,m,d$ be natural numbers such that $n \geq 2$ and $m,d \geq 1$, and let $X\subseteq \PP^n$ be a smooth hypersurface of degree $d$. Consider the Segre product $Z = X \times \PP^m \subseteq \PP^N$, where $N=nm+n+m$. What can we say about the number of defining equations of $Z$?}

Notice that the arithmetical rank of $Z$ can depend, at least a priori, by invariant different from $n,m,d$: in fact we will need other conditions on $X$. However for certain $n,m,d$ we can provide some answers to this question. To this aim we will use various arguments: from complex analysis to the theory of algebras with straightening law, passing through local cohomology, \'etale cohomology and much commutative algebra.

In the case $n=2$ and $m=1$, we introduce, for every $d$, a locus of special smooth projective plane curves of degree $d$, that we will denote by $\V_d$: this locus consists in those smooth projective curves $C$ of degree $d$ which have a $d$-flex, i.e. a point $P$ at which the intersection multiplicity of $C$ and the tangent line in $P$ is equal to $d$. Using methods coming from ``algebras with straightening law's theory" we prove that for such a curve $C$ the arithmetical rank of the Segre product $C \times \PP^1 \subseteq \PP^5$ is $4$, provided that $d\geq 3$ (see Corollary \ref{ara1}). It is easy to show that every smooth elliptic curve belongs to $\V_3$ and that every Fermat's curve of degree $d$ belongs to $\V_d$, so we recover the results obtained in \cite{siwa} and in \cite{song}. However the equations that we will find are different from  those found in these papers, and our result is characteristic free. Note that a result of Casnati and Del Centina \cite{casnati-del centina} shows that the codimension of $\V_d$ in the locus of all the smooth projective plane curves of degree $d$ is $d-3$, provided that $d\geq 3$ (Remark \ref{hyperflexes}). 

For a general $n$, we can prove that if $X\subseteq \PP^n$ is a general smooth hypersurface of degree not bigger than $2n-1$, then the arithmetical rank of $X\times \PP^1\subseteq \PP^{2n+1}$ is at most $2n$ (Corollary \ref{bello}). To establish this we need a higher-dimensional version of $\V_d$. This result is somehow in the direction of the open question whether any connected projective scheme of positive dimension in $\PP^N$ can be defined set-theoretically by $N-1$ equations. 

With some similar tools we can show that, if $F=x_n^d+\sum_{i=0}^{n-3}x_iG_i(x_0,\ldots ,x_n)$ and $X=\V_+(F)$ is smooth, then the arithmetical rank of $X\times \PP^1\subseteq \PP^{2n+1}$ is $2n-1$ (Theorem \ref{45}).

Using techniques similar to those of \cite{siwa}, we are able to show the following: the arithmetical rank of the Segre product $C \times \PP^m \subseteq \PP^{3m+2}$, where $C$ is a smooth conic of $\PP^2$, is equal to $3m$, provided that $\chara(k)\neq 2$ (Theorem \ref{conic}). In particular, $C \times \PP^m$ is a set-theoretic complete intersection if and only if $m=1$.

\vspace{2mm}

Lower bounds for the arithmetical rank usually come from cohomological considerations. We collect the necessary ingredients in Section \ref{preliminaries} using results from papers of Hartshorne \cite{hartshorne4}, of Ogus \cite{ogus} and of Lyubeznik \cite{ly3} about the cohomological dimension of open subschemes of projective schemes . Actually to our purpose we could use only \'etale cohomology: in fact the results obtained in Subsection \ref{1.2} are sufficient to compute the number of defining equations of the varieties described above. However when the characteristic of the base field is $0$ it is possible to get the same lower bound (also in a more general setting) by reducing to the case when $k=\CC$ and using singular, local and sheaves cohomology. 

The results of Section \ref{preliminaries} yield some nice consequences, independently from Section \ref{upper}:
\begin{compactenum}
\item For any $n$, $m$ and $d$, if $X$ is smooth, the arithmetical rank of $X\times \PP^m\subseteq \PP^N$ can vary just among $N-2$, $N-1$ and $N$.
\item A conjecture of Lyubeznik in \cite{ly2} (see Conjecture \ref{conjly}) states, roughly speaking, that ``the \`etale cohomology provides a better lower bound for the arithmetical rank than the local cohomology". We prove the conjecture in the characteristic $0$ case under a smoothness assumption, see Theorem \ref{lyub}.
\item We extend a result by Speiser obtained in characteristic $0$ in \cite{spe}, regarding the arithmetical rank of the diagonal in $\PP^n\times \PP^n$, to any characteristic, see Corollary \ref{spei}.
\item As a consequence of Theorem \ref{depth}, we get that if a smooth projective surface $X$ has a Cohen-Macaulay homogeneous coordinate ring, then the cohomological dimension of its complement in any $\PP^{n}$ is the least possible ($\codim_{\PP^n}X-1$). In positive characteristic the analog version was proved in any dimension by Peskine and Szpiro in \cite{PS}. Instead in characteristic $0$ the statement fails already for threefolds. This fact raises a nice question about a relationship between depth and cohomological dimension (Question \ref{depth-coho}).
\end{compactenum}

\vspace{2mm}

I wish to thank in a particular way my supervisor Aldo Conca. Beyond the suggestion of this problem, he stimulated me with clever questions and discussions, which often led to interesting facts.
Moreover I want to express my gratitude to Lucian B\u adescu too, for giving me precious suggestions regarding the geometric aspect of the work.
Finally, I have to give my thanks to Ciro Ciliberto, for suggesting me Lemma \ref{cil} and its proof, and to Anurag Singh and Gennady Lyubeznik, for reading the paper and giving me useful advices.

\section{Preliminaries for the lower bounds}\label{preliminaries}

As already said in the introduction, in this section we will get the necessary lower bounds we need using results about the cohomological dimension of open subschemes of projective schemes.

First we describe in a precise way the setting in which we will work: for a noetherian ring $R$ and an ideal $I \subseteq R$ we define the arithmetical rank of $I$ with respect to $R$ as the integer
\[ \ara(I)=\min \{ k: \ \exists \ f_1, \ldots, f_k \in R \mbox{ such that } \sqrt{I}=\sqrt{(f_1, \ldots, f_k)} \}. \]
Notice that to be more precise we should write $\ara_R(I)$, however it will be always clear from the context who is $R$.
A lower bound for the arithmetical rank is given by Krull's Hauptidealsatz:
\[ \ara(I) \geq \height(I). \]
If $R$ is graded and $I$ homogeneous we can also define the homogeneous arithmetical rank, that is the integer
\[ \ara_h(I)=\min \{ k: \ \exists \ f_1, \ldots, f_k \in R \mbox{ homogeneous such that } \sqrt{I}=\sqrt{(f_1, \ldots, f_k)} \}. \]
Obviously we have
\[ \ara(I) \leq \ara_h(I). \]
Assume that $R$ is a polynomial ring of $N+1$ variables over a field $k$, and that $I$ is a homogeneous ideal of $R$. Then $\ara(I)$ gives the least number of hypersurfaces of the affine space $\mathbb{A}^{N+1}$ to define set-theoretically $\V(I)=\{\wp \in \Spec(R) \ : \ \wp \supseteq I\} \subseteq \mathbb{A}^{N+1}=\Spec(R)$; similarly $\ara_h(I)$ gives the least number of hypersurfaces of $\PP^{N}$ to intersect set-theoretically to obtain $\V_+(I)=\{\wp \in \Proj(R) \ : \ \wp \supseteq I\} \subseteq \PP^N=\Proj(R)$. It is an open problem whether these two numbers are always equal (see the survey article of Lyubeznik \cite{ly4}).

\begin{os}
The reader should be careful to the following: the number $\ara(I)$, where $I$ is an ideal of a polynomial ring, in general, does not give the minimal number of polynomials whose zero-locus is the same zero-locus of $I$, namely $\Z(I)$. For instance, if $I=(f_1,\ldots ,f_m)\subseteq \mathbb{R}[x_0,\ldots ,x_N]$, clearly 
\[ \Z(I)=\Z(f_1^2+\ldots +f_m^2).\]
However $\ara(I)$ can be bigger than $1$. The reader should keep in mind that, unless the base field is algebraically closed, there is no relations between $\V(I)$ and $\Z(I)$.
\end{os}

We will say that  $I$ (or $X=\V_+(I)$) is a {\it set-theoretic complete intersection} if $\ara_h(I)= \height(I)=\operatorname{codim}_{\PP^N} X$.

For a Noetherian ring $R$ and an ideal $I \subseteq R$ the cohomological dimension $\cd(R,I)$ of $I$ (with respect to $R$) is the supremum of the integers $i$ such that there exists an $R$-module $M$ for which $H_I^i(M)\neq 0$.
It is well known that
\[ \ara(I) \geq \cd(R,I) \geq \height(I). \]
In the same way, the cohomological dimension $\cd(X)$ of a scheme $X$ is the supremum integer $i$ such that there exists a quasi coherent sheaf $\mathcal{F}$ such that  $H^i(X,\mathcal{F})\neq 0$.\\
If $R$ is a finitely generated positively graded $k$-algebra and $I \subseteq R$ is a homogeneous ideal non-nilpotent, then
\begin{equation}\label{cd} \cd(R,I)-1= \cd(\Spec(R)\setminus \V(I))=\cd(\Proj(R)\setminus \V_+(I)) \end{equation}
(see Hartshorne \cite{hart2}); so to bound the arithmetical rank of $I$, and hence the homogeneous arithmetical rank, we will give bounds on $\cd(\Proj(R) \setminus \mathcal{V}_+(I))$.

\subsection{Bounds in characteristic 0}\label{s1}

Throughout this subsection $k$ (or $K$) will denote a field of characteristic $0$. The following remark allows us to can assume, in many cases, that the base field is $\CC$.

\begin{os}\label{0}
Let $R$ be an $A$-algebra, $\aa \subseteq R$ an ideal, $B$ a flat $A$-algebra, $R_B=R \otimes_A B$, $M$ an $R$-module and $M_B = M\otimes_A B$. Using the \u{C}ech complex it is not difficult to prove that for every $j \in \NN$:
\begin{equation}\label{redtocomplex0}
H_{\aa}^j(M)\otimes_A B \cong H_{\aa R_B}^j(M_B)
\end{equation}

Now let $S=K[x_0, \ldots, x_n]$ and $I \subseteq S$ an ideal. Since $I$ is finitely generated we can find a field $k$ such that, setting $S_k=k[x_0, \ldots, x_n]$, the following properties hold:
\[ k \subseteq K, \ \ \QQ \subseteq k \subseteq \CC, \ \ (I \cap S_k)S=I \] 
(to this aim we only have to add to $\QQ$ the coefficients of a set of generators of $I$).
Since $K$ and $\CC$ are $k$-algebras faithfully flat equation (\ref{redtocomplex0}) implies that
\begin{equation}\label{redtocomplex1}
\cd(S,I)=\cd(S_k, I \cap S_k)=\cd(S_{\CC}, (I\cap S_k)S_{\CC}),
\end{equation}
where $S_{\CC}=\CC[x_1, \ldots, x_n]$.

In the above situation assume that $I$ is homogeneous and that $X=\Proj S/I$ is smooth over $K$. Then set $X_k=\Proj (S_k / (I\cap S_k))$ and $X_{\CC}=\Proj(S_{\CC}/((I\cap S_k)\CC))$. Notice that $X \cong X_k \times _k \Spec{K}$, \ $X_{\CC}\cong X_k \times _k \Spec{\CC}$, and that $X_k$ (respective $X_{\CC}$) is smooth over $k$ (respective over $\CC$). By base change (see Liu \cite[Chapter 6, Proposition 1.24 (a)]{liu}) and by the fact that $K$ and $\CC$ are both flat $k$-algebras, we get, for all natural numbers $i,j$,
\[ H^i(X, \Omega_{X/K}^j) \cong H^i(X_k, \Omega_{X_k /k}^j) \otimes_{k}K \] 
and
\[ H^i(X_{\CC}, \Omega_{X_{\CC}/\CC}^j) \cong H^i(X_k, \Omega_{X_k /k}^j) \otimes_{k}\CC \]
(see \cite[Chapter 5, Proposition 2.27]{liu}).
Particularly we have 
\begin{equation}\label{redtocomplex2}  \dim_K(H^i(X, \Omega_{X/K}^j))=\dim_{\CC}(H^i(X_{\CC}, \Omega_{X_{\CC}/\CC}^j)) 
\end{equation}
\end{os}

In the rest of this subsection $k$ will denote a field of characteristic 0. Moreover, if $X$ is a projective variety smooth over $k$, we will write $h^{ij}(X)$ for $\dim_k(H^i(X,\Omega_{X/k}^j))$.

In the next remark, for the convenience of the reader, we collect some well known facts which we will use throughout the paper.

\begin{os}\label{betti}
Let $X$ be a projective scheme over $\CC$: we will denote $\beta_i(X)$ the topological Betti number 
\[\beta_i(X)=\operatorname{rank}_{\ZZ}(H_i^{Sing}(X_{an},\ZZ))=\operatorname{rank}_{\ZZ}(H_{Sing}^i(X_{an},\ZZ))=\]
\[=\dim_{\CC}(H_{Sing}^i(X_{an},\CC))=\dim_{\CC}(H^i(X_{an},\underline{\CC}))\] 
($X_{an}$ means $X$ regarded as a complex manifold, in the sense of Serre \cite{GAGA}, and $\underline{\CC}$ denotes the locally constant sheaf associated to $\CC$).
Pick another projective scheme over $\CC$, say $Y$, and denote by $Z$ the Segre product $X \times Y$. The K\"unneth formula for singular cohomology (for instance see Hatcher \cite[Theorem 3.16]{hatcher}) yields 
\[H_{Sing}^i(Z_{an},\CC)\cong \bigoplus_{p+q=i}H_{Sing}^p(X_{an},\CC)\otimes_{\CC}H_{Sing}^q(Y_{an},\CC),\]
thus
\begin{equation}\label{betti1}
\beta_i(Z)=\sum_{p+q=i}\beta_p(X)\beta_q(Y).
\end{equation}

Now assume that $X$ is a projective variety smooth over $\CC$. It is well known that $X_{an}$ is a K\"ahler manifold, so the Hodge decomposition (see the notes of Arapura \cite[Theorem 10.2.4]{arap}) is available. Therefore together with a theorem of Serre (see \cite[Theoreme 1, pag. 19]{GAGA}) we have
\[H_{Sing}^i(X_{an},\CC)\cong \bigoplus_{p+q=i}H^p(X_{an},(\Omega_{X/\CC})_{an}^q)\cong \bigoplus_{p+q=i}H^p(X,\Omega_{X/\CC}^q),\]
where $\mathcal{F}_{an}$ is the analyticization of a sheaf $\mathcal{F}$ (see \cite{GAGA}). Thus
\begin{equation}\label{betti2}
\beta_i(X)=\sum_{p+q=i}h^{pq}(X)
\end{equation}

Finally note that the restriction map on singular cohomology
\begin{equation}\label{injective}
H_{Sing}^i(\PP_{an}^n,\CC)\longrightarrow H_{Sing}^i(X_{an},\CC)
\end{equation}
is injective provided that $i=0, \ldots ,2 \dim X$ (see Shafarevich \cite[pp. 121-122]{sha}). In particular, since $\beta_{2i}(\PP^n)=1$ if $i\leq n$, it follows that 
\begin{equation}\label{betti3}
\beta_{2i}(X)\geq 1 \mbox{ \ \ provided that }i\leq \dim X
\end{equation}
\end{os}

The following theorem is a quite simple consequence of the results of \cite{ogus}. It provides some necessary and sufficient conditions for the cohomological dimension of the complement of a smooth variety in a projective space to be smaller than a given integer.

\begin{thm}\label{1}
Let $X\subseteq \mathbb{P}^n$ be a projective variety smooth over $k$, $r$ an integer greater than or equal to $\operatorname{codim}_{\PP^n}X$ and $U=\mathbb{P}^n \setminus X$. Then $\cd(U) < r$ if and only if
\begin{displaymath}
h^{pq}(X) = \left\{ \begin{array}{cc} 0 & \mbox{if  } p \neq q, \ p+q < n - r\\
1 &  \mbox{if  } p=q, \ p+q <n-r \end{array}  \right.
\end{displaymath}
Moreover, if $k=\CC$, the above conditions are equivalent to:
\begin{displaymath}
\beta_i(X) = \left\{ \begin{array}{cc} 1 & \mbox{if  $i< n-r$ and $i$ is even} \\
0 &  \mbox{if  $i< n-r$ and $i$ is odd} \end{array}  \right.
\end{displaymath}
\end{thm}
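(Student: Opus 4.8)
The plan is to deduce Theorem \ref{1} from Ogus's results on local cohomological dimension, following the standard dictionary between $\cd(U)$ for $U=\PP^n\setminus X$ and the cohomology of the formal completion of $\PP^n$ along $X$. First I would recall the precise statement from \cite{ogus}: for $X$ smooth over a characteristic-zero field $k$, Ogus gives a numerical criterion for $\cd(U)<r$ in terms of the Hodge-theoretic data of $X$, more precisely in terms of the maps $H^i_{DR}(\PP^n)\to H^i_{DR}(X)$ being isomorphisms (or the de Rham cohomology of the formal neighborhood coinciding with that of $\PP^n$) in a range of degrees depending on $r$. The key translation is that the failure of $\cd(U)<r$ is measured by certain $H^q(X,\Omega^p_{X/k})$ being ``too big'' compared with the corresponding groups for $\PP^n$, and that $\PP^n$ has $h^{pq}(\PP^n)=1$ for $p=q\le n$ and $0$ otherwise. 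So the numerical condition ``$h^{pq}(X)=0$ for $p\ne q$ and $=1$ for $p=q$, whenever $p+q<n-r$'' is exactly the statement that $X$ looks like $\PP^n$ in Hodge degree $<n-r$.

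The main steps, in order, are: (1) reduce to $k=\CC$ using Remark \ref{0}, in particular equations (\ref{redtocomplex1}) and (\ref{redtocomplex2}), so that both $\cd(U)$ and all the $h^{pq}(X)$ are unchanged by the field extension; this lets me freely use transcendental methods. (2) Invoke Ogus's theorem (or Hartshorne--Ogus) to get the equivalence between $\cd(U)<r$ and the vanishing/one-dimensionality of the de Rham cohomology of the formal completion $\widehat{\PP^n}_{/X}$ in the appropriate range, then use the smoothness of $X$ together with the Lefschetz-type comparison to replace formal de Rham cohomology by the honest coherent cohomology groups $H^q(X,\Omega^p_{X/k})$. (3) Unwind the Hodge decomposition: using equation (\ref{betti2}), $\beta_i(X)=\sum_{p+q=i}h^{pq}(X)$, together with the fact (\ref{injective}) that $H^i_{Sing}(\PP^n_{an},\CC)\hookrightarrow H^i_{Sing}(X_{an},\CC)$ is injective for $i\le 2\dim X$, which forces $\beta_{2i}(X)\ge 1$ for $i\le\dim X$ and $h^{pp}(X)\ge 1$ for $p\le\dim X$. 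Then show that the condition on the $h^{pq}$'s for $p+q<n-r$ is equivalent to the condition on the Betti numbers $\beta_i(X)$ for $i<n-r$: one direction is immediate from (\ref{betti2}); for the converse, $\beta_i(X)=1$ with $i=2p$ even forces $h^{pp}(X)=1$ and all other $h^{p'q'}$ with $p'+q'=i$ to vanish (using $h^{pp}\ge1$ and nonnegativity), while $\beta_i(X)=0$ with $i$ odd forces all $h^{pq}=0$ with $p+q=i$.

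The hard part will be pinning down exactly which form of Ogus's result to cite and how cleanly it yields the two-sided numerical criterion in the stated range $p+q<n-r$ (as opposed to $\le$); I expect this to require care about the precise indexing in \cite{ogus}, about the role of the hypothesis $r\ge\codim_{\PP^n}X$ (which guarantees the range $n-r\le\dim X$ so that the comparison with $\PP^n$'s cohomology is meaningful and the $\beta_{2i}(X)\ge1$ bound applies), and about making sure the ``only if'' direction is genuinely available and not just the ``if'' direction. The rest — the reduction to $\CC$ and the Hodge-theoretic bookkeeping translating $h^{pq}$-conditions into $\beta_i$-conditions — is routine given Remarks \ref{0} and \ref{betti}. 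I would present the proof as: reduce to $\CC$; quote Ogus for the equivalence with the $h^{pq}$-conditions; then give the short Hodge-decomposition argument for the equivalence with the $\beta_i$-conditions.
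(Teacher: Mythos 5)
Your overall architecture (reduce to $\CC$ via Remark \ref{0}, invoke Hartshorne--Ogus for the cohomological-dimension criterion, then do the Hodge-theoretic bookkeeping) is the paper's architecture, and your steps (1) and (3) match it: the reduction using (\ref{redtocomplex1}) and (\ref{redtocomplex2}), and the translation between the $h^{pq}$-conditions and the $\beta_i$-conditions via (\ref{betti2}), the injectivity (\ref{injective}) and the compatibility of the restriction maps with the Hodge decomposition, are exactly what is done. The gap is in your step (2), which you yourself flag as ``the hard part'' and leave unresolved: there is no single two-sided theorem in \cite{ogus} giving $\cd(U)<r$ if and only if the stated $h^{pq}$-conditions, and the mechanism you propose (de Rham cohomology of the formal completion, with smoothness used to ``replace formal de Rham cohomology by coherent cohomology'') is not how the equivalence is actually obtained. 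In the paper the two directions come from different sources: the ``only if'' direction is a direct citation of Hartshorne \cite[Corollary 7.5, p.~148]{hartshorne4}, while the ``if'' direction uses Ogus' Theorem 4.4, which requires, besides knowing that the restriction maps $H_{DR}^i(\PP^n)\to H_{DR}^i(X)$ are isomorphisms for $i<n-r$, the hypothesis that the De Rham-depth of $X$ is at least $n-r$. That hypothesis has to be verified, and this --- not a formal-to-coherent comparison --- is where the smoothness of $X$ enters: for every graded prime $\wp\supseteq\aa$ with $\wp\neq\mm$ the ideal $\aa S_{\wp}$ is a complete intersection, so $(H_{\aa}^i(S))_{\wp}=0$ for $i>r$, i.e. $\Supp(H_{\aa}^i(S))\subseteq\{\mm\}$, which by the proof of \cite[Theorem 4.1]{ogus} gives the required De Rham-depth bound.

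Moreover, before Ogus' theorem can be applied you must convert the numerical hypotheses on the $h^{pq}$ into the statement about restriction maps: this is done via Grothendieck's comparison of algebraic De Rham with singular cohomology \cite{gro}, so that (\ref{betti2}) turns the hypotheses into $\beta_i(X)=\beta_i(\PP^n)$ for $i<n-r$, and then the injectivity (\ref{injective}) upgrades injective maps between spaces of equal finite dimension to isomorphisms in that range; your sketch omits this conversion entirely. Your worry about whether the ``only if'' direction is genuinely available is legitimate, but it is resolved by the separate citation of Hartshorne rather than by any form of Ogus' result, and the hypothesis $r\ge\codim_{\PP^n}X$ plays only the role you guessed (keeping the range $i<n-r$ within $[0,2\dim X]$ so that (\ref{injective}) and (\ref{betti3}) apply). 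So the plan is pointed in the right direction, but the central equivalence is deferred to a theorem that does not exist in the form you need; closing the gap requires precisely the two ingredients above (Hartshorne for necessity; the De Rham-depth verification plus the Grothendieck comparison and Barth-type injectivity to apply Ogus for sufficiency).
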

\begin{proof}
By equations (\ref{redtocomplex1}) and (\ref{redtocomplex2}) of Remark \ref{0} we can reduce the problem in the case in which $k=\CC$. So the ``only if"-part follows by a result of Hartshorne \cite[Corollary 7.5, p. 148]{hartshorne4}.

So it remains to prove the ``if"-part. By a theorem of Grothendieck in \cite{gro} algebraic De Rham cohomology agrees with singular cohomology. Therefore by the last part of Remark \ref{betti} the restriction maps
\begin{equation}\label{derham}
H_{DR}^i(\PP^n) \longrightarrow H_{DR}^i(X)
\end{equation}  
(where $H_{DR}$ denotes the algebraic De Rham cohomology) are injective for all $i \leq 2 \dim X$. By the assumptions, equation (\ref{betti2}) yields
$\beta_i(X)=1$ if $i$ is even and $i<n-r$, $0$ otherwise. Moreover $\beta_i(\PP^n)=1$ if $i$ is even and $i\leq 2n$, $0$ otherwise. So using again the result of Grothendieck the maps in (\ref{derham}) are isomorphisms for all $i < n-r$.

Now we would use a result of Ogus (\cite[Theorem 4.4]{ogus}), and to this aim we will show that the De Rham-depth of $X$ is greater than or equal to $n-r$. By the proof of \cite[Theorem 4.1]{ogus} this is equivalent to the fact that $\operatorname{Supp}(H_{\aa}^i(S))\subseteq \mm$ for all $i > r$, where $S=\CC[x_0, \ldots ,x_n]$, $\aa \subseteq S$ is the ideal defining $X$ and $\mm$ is the maximal irrelevant ideal of $S$. But this is easy to see, because if $\wp$ is a graded prime ideal containing $\aa$ and different from $\mm$, being $X$ non singular, $\aa S_{\wp}$ is a complete intersection in $S_{\wp}$: so $(H_{\aa}^i(S))_{\wp}\cong H_{\aa S_{\wp}}^i(S_{\wp})=0$ for all $i > r \ (\geq \operatorname{ht(\aa S_{\wp})})$. Hence \cite[Theorem 4.4]{ogus} yields the conclusion.

Finally, if $k=\CC$, the last condition is a consequence of the first one by equation (\ref{betti2}). Moreover, it implies the first one because the restriction maps of singular cohomology
\[H_{Sing}^i(\PP_{an}^n,\CC) \longrightarrow H_{Sing}^i(X_{an},\CC) \]
(that are injective if $i<n-r$ by the last part of Remark \ref{betti}) are compatible with the Hodge decomposition (see \cite[Corollary 11.2.5]{arap}).

\end{proof}

\begin{os}\label{elliptic}
Theorem \ref{1} does not hold in positive characteristic: for instance pick an elliptic curve $E$ over a field of positive characteristic whose Hasse invariant is $0$. Then set $X=E \times \PP^1 \subseteq \PP^5$ and $U=\PP^5 \setminus X$. The Frobenius acts as 0 on $H^1(X,\O_X)$, so $\cd(U)=2$ (see Hartshorne and Speiser \cite{ha-sp} or Lyubeznik \cite{ly1}). However notice that $H^1(X,\O_X)\neq 0$.
\end{os}

The two propositions below provide the necessary lower bound we need to compute the arithmetical rank of certain Segre products in characteristic 0.

\begin{prop}\label{mah}
Let $X$ and $Y$ be two positive dimensional projective schemes smooth over $k$, and set $Z=X \times Y \subseteq \PP^N$ (any embedding) and $U=\PP^N \setminus Z$. Then $\cd(U) \geq N-3$. In particular if $\dim Z \geq 3$, $Z$ is not a set-theoretic complete intersection.
\end{prop}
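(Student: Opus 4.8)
The plan is to reduce, via Remark~\ref{0}, to the case $k = \CC$, and then to exhibit a nonvanishing sheaf-cohomology group on $U$ in degree $N-3$ using Theorem~\ref{1} together with the K\"unneth formula. First I would recall that by \eqref{redtocomplex1} (and the discussion of \'etale/De Rham comparison in Remark~\ref{0}) the cohomological dimension $\cd(U)$ is unchanged when we extend scalars to $\CC$, so we may assume $X$, $Y$, and hence $Z = X\times Y$, are smooth projective varieties over $\CC$ and work with the topological Betti numbers $\beta_i(Z)$. The idea is then to apply Theorem~\ref{1} in contrapositive form: if $\cd(U) < N-2$, i.e. $\cd(U) < r$ with $r = N-3$, then necessarily $\beta_i(Z) = 1$ for $i$ even and $\beta_i(Z) = 0$ for $i$ odd, for all $i < n - r = N - (N-3) = 3$; in particular we would need $\beta_2(Z) = 1$.

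The main step is to show $\beta_2(Z) \geq 2$, which contradicts the above and forces $\cd(U) \geq N-3$. For this I would use the K\"unneth formula \eqref{betti1}:
\[
\beta_2(Z) = \beta_0(X)\beta_2(Y) + \beta_1(X)\beta_1(Y) + \beta_2(X)\beta_0(Y).
\]
Since $X$ and $Y$ are connected projective varieties, $\beta_0(X) = \beta_0(Y) = 1$. Since $\dim X \geq 1$ and $\dim Y \geq 1$, the inequality \eqref{betti3} gives $\beta_2(X) \geq 1$ and $\beta_2(Y) \geq 1$. Hence $\beta_2(Z) \geq \beta_2(Y) + \beta_2(X) \geq 1 + 1 = 2$, as required. (Here I do not even need the cross-term $\beta_1(X)\beta_1(Y)$, which is anyway nonnegative.) This yields $\cd(U) \geq N-3$.

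For the final assertion, suppose $\dim Z \geq 3$ and that $Z$ is a set-theoretic complete intersection, i.e. $\ara_h(I) = \height(I) = \codim_{\PP^N} Z$, where $I \subseteq S = k[x_0,\ldots,x_N]$ is the defining ideal. Then by \eqref{cd},
\[
\cd(U) = \cd(\Proj S \setminus \V_+(I)) = \cd(S,I) - 1 \le \ara(I) - 1 \le \ara_h(I) - 1 = \codim_{\PP^N} Z - 1 = N - \dim Z - 1 \le N - 4,
\]
contradicting $\cd(U) \geq N-3$. The only mild subtlety — the ``hard part'' — is making sure the scalar-extension reduction in Remark~\ref{0} applies: one needs $Z = X \times Y$ to be smooth over $k$, which holds since a product of smooth $k$-varieties is smooth, and one needs the Segre embedding to be defined over the appropriate subfield $k_0$ with $\QQ \subseteq k_0 \subseteq \CC$; this is automatic since only finitely many coefficients are involved. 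Everything else is a direct combination of Theorem~\ref{1}, \eqref{betti1}, and \eqref{betti3}.
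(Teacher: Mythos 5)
Your argument is correct and is essentially the paper's own proof: reduce to $k=\CC$ via Remark \ref{0}, use \eqref{betti3} together with the K\"unneth formula \eqref{betti1} to get $\beta_2(Z)\geq 2$, and conclude from \eqref{betti2} and Theorem \ref{1}; you merely spell out the contrapositive and the set-theoretic complete intersection consequence in more detail than the paper does. One small slip to fix: the contrapositive hypothesis should read $\cd(U)<N-3$ (that is $\cd(U)<r$ with $r=N-3$), not $\cd(U)<N-2$ — the rest of your computation ($i<N-r=3$, hence $\beta_2(Z)=1$) already uses the correct value.
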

\begin{proof}
By equation (\ref{redtocomplex1}) we can assume $k=\CC$. Using equation (\ref{betti3}) we have $\beta_0(X) \geq 1$, \ $\beta_2(X) \geq 1$, \ $\beta_0(Y) \geq 1$ and $\beta_2(Y) \geq 1$, so equation (\ref{betti1}) implies $\beta_2(Z) \geq 2$. Now equation (\ref{betti2}) and Theorem \ref{1} yield the conclusion.
\end{proof}

\begin{os}
The proof of Proposition \ref{mah} yields the following nice fact:

Let $X$ and $Y$ be two positive dimensional projective varieties smooth over $\CC$ and $Z=X \times Y \subseteq \PP^N$. Then the dimension of the secant variety of $Z$ in $\PP^N$ is at least $2 \dim Z - 1$.

To prove this note, as in the proof of Proposition \ref{mah}, that $\beta_2(Z)\geq 2$. By a theorem of Barth (see Lazarsfeld \cite[Theorem 3.2.1]{la}), it follows that $Z$ cannot be embedded in any $\PP^M$ with $M<2 \dim X -1$. If the dimension of the secant variety were less than $2 \dim X-1$, it would be possible to project down in a biregular way $X$ from $\PP^N$ in $\PP^{2 \dim X-2}$, and this would be a contradiction.

Note that the above lower bound is the best possible, in fact $\PP^r \times \PP^s$ can be embedded in $\PP^{2(r+s)-1}$ (see Hartshorne \cite[p. 1026]{hart1}).
\end{os}

\begin{os}\label{gowa}
The statement of Proposition \ref{mah} is false in positive characteristic. To see this, consider two Cohen-Macaulay graded $k$-algebras $A$ and $B$ of negative $a$-invariant. Set $R=A\#B$ their Segre product (with the notation of the paper of Goto and Watanabe \cite{GW}). By \cite[Theorem 4.2.3]{GW} $R$ is Cohen-Macaulay as well. So, presenting $R$ as a quotient of a polynomial ring of $N+1$ variables, say $R\cong P/I$, a theorem of Peskine and Szpiro in \cite{PS} implies that $\cd(P,I)=N+1- \dim R$ (because $\chara(k)>0$). Translating in the language of Proposition \ref{mah} we have $X=\Proj(A)$, $Y=\Proj(B)$, $Z=\Proj(R) \subseteq \PP^N=\Proj(P)$ and $\cd(\PP^N \setminus Z)=\cd(P,I)-1=N - \dim Z -1$.
\end{os}

\begin{prop}\label{15}
Assume that $X$ is a projective variety smooth over $k$ such that $H^1(X,\O_X)\neq 0$ and let $Y$ be any projective scheme over $k$. As above set $Z=X \times Y \subseteq \PP^N$ (any embedding) and $U=\PP^N \setminus Z$. Then $\cd(U) \geq N - 2$.
\end{prop}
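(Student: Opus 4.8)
The plan is to reduce to the case $k = \CC$ and then extract a nonvanishing $H^1$-type obstruction that forces $\cd(U)$ to be large, in parallel with the proof of Proposition \ref{mah} but now using an odd Betti number. By equation (\ref{redtocomplex1}) (and, for the Hodge numbers, equation (\ref{redtocomplex2})) we may assume $k = \CC$. The hypothesis $H^1(X, \O_X) \neq 0$ says $h^{01}(X) \geq 1$, so by equation (\ref{betti2}) we get $\beta_1(X) \geq 1$. Since $Y$ is a nonempty projective scheme, $\beta_0(Y) \geq 1$ (it has a connected component, or at worst $H^0(Y_{an}, \underline{\CC}) \neq 0$). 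Then the K\"unneth formula (\ref{betti1}) gives $\beta_1(Z) \geq \beta_1(X)\beta_0(Y) \geq 1$, i.e. $Z$ has a nonvanishing \emph{odd} Betti number in degree $1$.

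Now I would invoke Theorem \ref{1}. We want to conclude $\cd(U) \geq N-2$, i.e. to rule out $\cd(U) < N-2$, equivalently (taking $r = N-2 \geq \codim_{\PP^N} Z$, which holds since $\dim Z \geq 2$ because $X$ is positive-dimensional by the hypothesis $H^1(X,\O_X)\neq 0$ and $Y$ is nonempty — strictly, I should check $\codim_{\PP^N} Z \le N-2$, i.e. $\dim Z\ge 2$; if $Y$ were $0$-dimensional one still has $\dim Z=\dim X\ge 1$, and one can run the same argument with $r=N-2$ only when $\codim\le N-2$, so a small case check on $\dim Z$ is warranted). Theorem \ref{1} with $r = N-2$ says $\cd(U) < N-2$ would force $\beta_i(Z) = 0$ for all odd $i < n - r = N - (N-2) = 2$, in particular $\beta_1(Z) = 0$. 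This contradicts $\beta_1(Z) \geq 1$. Hence $\cd(U) \geq N-2$.

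\textbf{Expected main obstacle.} The computational content is light; the delicate points are bookkeeping rather than深 mathematics. First, one must make sure the hypotheses of Theorem \ref{1} are genuinely met: $X$ is smooth (given), but $Z = X \times Y$ need not be smooth (only $X$ is assumed smooth, $Y$ is an arbitrary projective scheme), whereas Theorem \ref{1} is stated for a \emph{smooth} subvariety of $\PP^n$. So the real work is to check that Theorem \ref{1}, or rather the cohomological input behind it (Hartshorne's \cite[Corollary 7.5]{hartshorne4} / Ogus's criterion, used here only in the ``only if'' direction which is what we need for the lower bound), applies to $Z$ — this likely means one should not apply Theorem \ref{1} verbatim but instead cite the underlying result of Hartshorne \cite{hartshorne4} that $\cd(U) < r$ implies the restriction maps $H^i_{DR}(\PP^n) \to H^i_{DR}(Z)$ are isomorphisms for $i < n - r$, which does not require smoothness of $Z$. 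From an isomorphism in degree $1$ we read off $\beta_1(Z) = \beta_1(\PP^N) = 0$, the desired contradiction. Second, one should double-check the range condition $r \geq \codim_{\PP^N} Z$ in the possibly degenerate situations (e.g. $\dim Z$ small, $N$ small), though under the standing conventions of the paper $Z$ is positive-dimensional and nondegenerate enough that this is automatic.
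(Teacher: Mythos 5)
Your argument is correct and essentially coincides with the paper's proof: reduce to $k=\CC$ via Remark \ref{0}, deduce $\beta_1(Z)\neq 0$ from $h^{01}(X)\neq 0$ using the Hodge decomposition (\ref{betti2}) and K\"unneth (\ref{betti1}), and conclude from Hartshorne's comparison result. The obstacle you flag is genuine---Theorem \ref{1} cannot be applied verbatim because $Z=X\times Y$ need not be smooth---and your proposed fix is exactly what the paper does: it invokes \cite[Theorem 7.4, p. 148]{hartshorne4} directly, which requires only that $U$ be smooth, automatic since $U$ is open in $\PP^N$.
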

\begin{proof}
By virtue of Remark \ref{0} we may assume $k=\CC$. The assumptions imply that $\beta_1(X)\neq 0$ by equation (\ref{betti2}), and so $\beta_1(Z) \neq 0$ by equation (\ref{betti1}). Clearly $U$ is smooth, so \cite[Theorem 7.4, p. 148]{hartshorne4} implies the conclusion.
\end{proof}

\begin{os}
If in the situation of proposition \ref{15} $\dim Z\geq 2$, then it follows that $Z$ cannot be a set--theoretical complete intersection. This is a consequence of a more general result of Hartshorne obtained in \cite{hart2}, that states that an irregular projective variety $X$ over a field of characteristic $0$ (i.e. $q(X)=h^{10}(X)\neq 0$), of dimension is greater than $1$, cannot be a set-theoretical complete intersection in any $\PP^n$.
\end{os}

\subsection{Bounds in arbitrary characteristic}\label{1.2}

If the characteristic of the base field is $0$ we have seen in the previous subsection that we can, usually, reduce the problem to $k=\CC$; in this context is available the complex topology, so we can use methods from algebraic topology and from complex analysis.

Unfortunately when the characteristic of $k$ is positive, the above techniques are not available. Moreover some of the results obtained in Subsection \ref{s1} are not true in positive characteristic, as we have shown in Remarks \ref{elliptic} and \ref{gowa}. To avoid these difficulties we have to use \'etale cohomology, that gives a lower bound for the number of equations defining a variety as well as local cohomology (see equation (\ref{ecd}) of Remark \ref{etale}). This subject was introduced by Grothendieck in \cite{sga4}. Other references are the book \cite{milne} and the lectures \cite{milnel} of Milne.

For a scheme $X$ we denote by $X_{\et}$ the \'etale site of $X$ and, with a slight abuse of notation, by $\ZZ/l\ZZ$ the constant sheaf associated to $\ZZ/l\ZZ$ (for any $l \in \ZZ$). Moreover, we denote by $\ecd(X)$ the \'etale cohomological dimension of $X$, that is the largest integer $i$ such that there exists a torsion sheaf $\mathcal{F}$ on $X_{\et}$ with \'etale cohomology group $H^i(X_{\et}, \mathcal{F}) \neq 0$ ($H^i$ denotes the usual cohomology of sheaves).
Below we collect some basic results about the \'etale cohomological dimension.

\begin{os}\label{etale}
If $X$ is a $n$-dimensional scheme of finite type over a separably closed field, then $\ecd(X) \leq 2n$ (\cite[Chapter VI, Theorem 1.1]{milne}).
If moreover $X$ is affine, then $\ecd(X) \leq n$ (\cite[Chapter VI, Theorem 7.2]{milne}).

Assume that $X=\Proj(R)$ is projective and pick a closed subscheme $Y=\mathcal{V}_+(I) \subseteq X$. Then $U=X \setminus Y$ can be cover by $ \ara_h(I)$ affine subsets of $X$. Moreover the \'etale cohomological dimension of these affine subsets is less than or equal to $n$ for what said above. So, using repetitively the Mayer-Vietoris sequence (\cite[Chapter III, Exercise 2.24]{milne}), it is easy to prove that
\begin{equation}\label{ecd}
\ecd(U) \leq n+ \ara_h(I) -1
\end{equation}
The above inequality was remarked, for instance, by Newstead in \cite{new}. 
\end{os}

We recall the following result of \cite[Proposition 9.1, (iii)]{ly3}, that can be seen as an \'etale version of \cite[Theorem 8.6, p. 160]{hartshorne4}.

\begin{thm}\label{13}(Lyubeznik)
Let $k$ be a separably closed field of arbitrary characteristic, $Y \subseteq X$ two projective varieties such that $U=X \setminus Y$ is non-singular. Set $N=\dim X$,  and $l \in \ZZ$ coprime with the characteristic of $k$. If  $\ecd(U) < 2N-r$, then the restriction maps
\[H^i(X_{\et}, \ZZ/l\ZZ) \longrightarrow H^i(Y_{\et},\ZZ/l\ZZ)\] 
are isomorphism for $i < r$ and injective for $i=r$.
\end{thm}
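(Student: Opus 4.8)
The plan is to mimic, in the étale setting, the classical argument of Hartshorne from \cite[Theorem 8.6, p. 160]{hartshorne4}, using the long exact sequence of étale cohomology with supports together with the bound \'ecd$(U) < 2N - r$. First I would set up the long exact sequence of the pair, i.e. the sequence relating $H^i_Y(X_{\et}, \ZZ/l\ZZ)$, $H^i(X_{\et}, \ZZ/l\ZZ)$ and $H^i(U_{\et}, \ZZ/l\ZZ)$:
\[
\cdots \longrightarrow H^i_Y(X_{\et}, \ZZ/l\ZZ) \longrightarrow H^i(X_{\et}, \ZZ/l\ZZ) \longrightarrow H^i(U_{\et}, \ZZ/l\ZZ) \longrightarrow H^{i+1}_Y(X_{\et}, \ZZ/l\ZZ) \longrightarrow \cdots
\]
From this sequence, the restriction map $H^i(X_{\et},\ZZ/l\ZZ) \to H^i(U_{\et},\ZZ/l\ZZ)$ is an isomorphism for $i < c-1$ and injective for $i = c-1$, where $c$ is the smallest integer with $H^c_Y(X_{\et},\ZZ/l\ZZ) \neq 0$. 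So the first key step is to bound the local cohomological dimension of $X$ along $Y$ from below, i.e. to show $H^i_Y(X_{\et}, \ZZ/l\ZZ) = 0$ for $i$ small, equivalently $i \le 2N - 1 - \ecd(U)$; combined with the hypothesis $\ecd(U) < 2N - r$ this gives vanishing for $i \le r$, hence $c > r$.

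The second key step is to pass from $U$ to $Y$. For this I would invoke the étale analogue of the Lefschetz-type duality / the formal-scheme comparison: since $U = X \setminus Y$ is non-singular of dimension $N$, by the affine vanishing and Artin's theorems collected in Remark \ref{etale} one controls $H^i(U_{\et}, \ZZ/l\ZZ)$ in high degrees, and by Poincaré duality on the smooth variety $U$ (available since $l$ is coprime to the characteristic) one converts the hypothesis $\ecd(U) < 2N - r$ into the statement that $H^i_c(U_{\et}, \ZZ/l\ZZ) = 0$, or dually $H^i_Y(X_{\et},\ZZ/l\ZZ)=0$, for $i \le r$ — exactly what is needed. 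Then the long exact sequence gives that $H^i(X_{\et},\ZZ/l\ZZ)\to H^i(U_{\et},\ZZ/l\ZZ)$ is an isomorphism for $i<r$ and injective for $i=r$. Finally, one replaces $U$ by $Y$ using the fact that $X$ is the disjoint-ish union (as topological/étale spaces) — more precisely, since one does not in general have $H^*(U_{\et})\cong H^*(Y_{\et})$, one instead runs the argument with $Y$ directly via the excision/purity comparison, or else cites \cite[Proposition 9.1]{ly3}'s own internal reduction; the cleanest route is to observe that the composite $H^i(X_{\et},\ZZ/l\ZZ)\to H^i(Y_{\et},\ZZ/l\ZZ)$ factors through the formal completion $\hat H^i$ of $X$ along $Y$, and that the comparison $H^i(X_{\et})\cong \hat H^i$ for $i<r$ is equivalent to the vanishing of $H^{i}_Y$ just established (this is the étale version of Hartshorne's formal-duality argument).

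The main obstacle I anticipate is precisely this last dualization: carefully matching "$\ecd(U) < 2N-r$" with "$H^i_Y(X_{\et},\ZZ/l\ZZ)=0$ for $i\le r$" requires Poincaré–Lefschetz duality for the smooth non-proper variety $U$ with $\ZZ/l\ZZ$-coefficients, and one must be attentive to the shift $2N$ coming from the dimension and to the coprimality of $l$ with the characteristic (which is exactly why that hypothesis is imposed). Everything else — the long exact sequence, the affine vanishing $\ecd(\text{affine}) \le n$, and the Mayer–Vietoris bookkeeping — is formal once this duality is in place. Since the statement is explicitly attributed to \cite[Proposition 9.1, (iii)]{ly3}, I would in the write-up simply cite that source for the duality input and spell out only the long-exact-sequence deduction.
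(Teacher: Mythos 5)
The paper does not prove this statement at all: it simply quotes it from \cite[Proposition 9.1, (iii)]{ly3}, so there is no internal argument to compare against, and your sketch must be judged on its own merits. As written it has a genuine gap, located exactly where you flag the ``main obstacle''. The exact sequence you set up, the one for cohomology with supports in $Y$,
\[
\cdots \longrightarrow H^i_Y(X_{\et},\ZZ/l\ZZ) \longrightarrow H^i(X_{\et},\ZZ/l\ZZ) \longrightarrow H^i(U_{\et},\ZZ/l\ZZ) \longrightarrow H^{i+1}_Y(X_{\et},\ZZ/l\ZZ) \longrightarrow \cdots,
\]
compares $X$ with the open complement $U$, whereas the theorem compares $X$ with the closed subvariety $Y$; no vanishing of $H^i_Y(X_{\et},\ZZ/l\ZZ)$ can by itself produce a statement about the restriction to $H^i(Y_{\et},\ZZ/l\ZZ)$. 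The repairs you suggest do not close this gap: purity/excision requires smoothness of the pair $(X,Y)$, which is not assumed (only $U$ is smooth; $X$ and $Y$ may be quite singular), and Hartshorne's formal-completion duality is a coherent-cohomology argument with no étale analogue of the form you would need. Also, the identification ``$H^i_c(U_{\et},\ZZ/l\ZZ)=0$, or dually $H^i_Y(X_{\et},\ZZ/l\ZZ)=0$'' is not correct: these are different groups sitting in different exact triangles, and only the first is what duality on $U$ controls.

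The correct route (essentially Lyubeznik's) uses the other sequence, namely compactly supported cohomology. Since $X$ and $Y$ are proper, $H^i(X_{\et},\ZZ/l\ZZ)=H^i_c(X_{\et},\ZZ/l\ZZ)$ and similarly for $Y$, and the triangle $j_!\ZZ/l\ZZ \to \ZZ/l\ZZ \to i_*\ZZ/l\ZZ$ gives
\[
\cdots \longrightarrow H^i_c(U_{\et},\ZZ/l\ZZ) \longrightarrow H^i(X_{\et},\ZZ/l\ZZ) \longrightarrow H^i(Y_{\et},\ZZ/l\ZZ) \longrightarrow H^{i+1}_c(U_{\et},\ZZ/l\ZZ) \longrightarrow \cdots,
\]
where the middle map is restriction. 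Poincar\'e duality on the smooth $N$-dimensional $U$ (this is where $l$ prime to the characteristic is used) gives $H^i_c(U_{\et},\ZZ/l\ZZ)\cong \operatorname{Hom}\bigl(H^{2N-i}(U_{\et},\mu_l^{\otimes N}),\ZZ/l\ZZ\bigr)$, and for $i\le r$ one has $2N-i\ge 2N-r>\ecd(U)$, so these groups vanish; the long exact sequence then gives the isomorphisms for $i<r$ and injectivity for $i=r$. Finally, note that ending by citing \cite[Proposition 9.1]{ly3} ``for the duality input'' is circular, since that proposition is precisely the statement to be proved; if you prefer to cite rather than prove, cite it for the whole statement, as the paper does.
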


\begin{os}
The \'etale version of Theorem \ref{1} does not hold. In fact, the integer $\ecd(\PP^N \setminus Y)$ is not an invariant of only $Y$ and $N$ (as instead is for the integer $\cd(\PP^N \setminus Y)$). For instance we can consider $\PP^2 \subseteq \PP^5$ (embedded as a linear subspace) and $v_2(\PP^2)\subseteq \PP^5$ (where $v_2(\PP^2)$ is the $2$nd Veronese embedding): the first one is defined (also scheme-theoretically) by $3$ linear equations, so $\ecd(\PP^5 \setminus \PP^2)\leq 7$ by equation (\ref{ecd}); instead, $\ecd(\PP^5 \setminus v_2(\PP^2))=8$ by \cite{barile}.  

Notice that the above argument shows that the number of defining equations of a projective schemes $X\subseteq \PP^n$ depends on the embedding, and not only on $X$ and on $n$. This suggests the limits of the use of local cohomology on certain problems regarding the arithmetical rank.
\end{os}

In \cite{spe} Speiser, among other things, computed the arithmetical rank of the diagonal $\Delta = \Delta(\PP^n)\subseteq \PP^n \times \PP^n$, provided that the characteristic of the base field is $0$. In characteristic $p>0$ he proved that the cohomological dimension of $\PP^n\times \PP^n \setminus \Delta$ is the least possible, i.e. $n-1$, therefore in positive characteristic it is not known the arithmetical rank of $\Delta$. Actually Theorem \ref{13} easily implies that the result of Speiser holds in arbitrary characteristic, since the upper bound found in \cite{spe} is valid in arbitrary characteristic. However, since in that paper the author did not describe the equations needed to define set-theoretically $\Delta$, we provide the upper bound with a different method, that yields an explicit set of equations for $\Delta$.

To this aim, we recall that the coordinate ring of $\PP^n \times \PP^n$ is $A=k[x_iy_j:i,j=0, \ldots ,n]$ and the ideal $I\subseteq A$ defining $\Delta$ is $I=(x_iy_j-x_jy_i:0\leq i<j\leq n)$.  

\begin{cor}\label{spei}
In the situation described above (with $k$ a separably closed field of arbitrary characteristic) $\ara_h(I)=2n-1$.
\end{cor}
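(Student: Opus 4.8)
The plan is to establish the two inequalities $\ara_h(I)\leq 2n-1$ and $\ara_h(I)\geq 2n-1$ separately. For the lower bound, I would apply Theorem \ref{13}. Set $X=\PP^n\times\PP^n$, $Y=\Delta$, $U=X\setminus\Delta$, so $N=\dim X=2n$; note $U$ is non-singular since $X$ is. Since $\Delta\cong\PP^n$ has $H^i(\Delta_{\et},\ZZ/l\ZZ)\neq 0$ for $i=2j$ with $0\leq j\leq n$ (standard computation of étale cohomology of projective space, $l$ coprime to the characteristic), while $H^{2n}(X_{\et},\ZZ/l\ZZ)$ has rank $n+1$ but $H^{2n}(\Delta_{\et},\ZZ/l\ZZ)$ has rank $1$, the restriction map $H^{2n}(X_{\et},\ZZ/l\ZZ)\to H^{2n}(\Delta_{\et},\ZZ/l\ZZ)$ cannot be injective. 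By the contrapositive of Theorem \ref{13} (with $r=2n$) this forces $\ecd(U)\geq 2N-r=4n-2n=2n$. Now combine with equation (\ref{ecd}) of Remark \ref{etale}: $U=X\setminus\V_+(I)$ can be covered by $\ara_h(I)$ affine opens of $X$, hence $\ecd(U)\leq N+\ara_h(I)-1=2n+\ara_h(I)-1$; wait, that gives nothing. Instead one must be slightly more careful: the affine opens appearing have dimension $\leq N=2n$, but to get a sharp bound I would recall that $\ecd$ of an affine $n$-fold is $\leq n$ and rerun Mayer--Vietoris to obtain $\ecd(U)\leq \dim U+\ara_h(I)-1$. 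Actually the cleanest route is: by (\ref{ecd}), $\ecd(U)\leq N+\ara_h(I)-1$ would need $\ara_h(I)\geq \ecd(U)-N+1\geq 2n-2n+1=1$, still too weak. The correct comparison uses that $U$ is covered by $\ara_h(I)$ affines each of étale cohomological dimension $\leq N=2n$, but the Mayer--Vietoris argument of Remark \ref{etale} in fact gives $\ecd(U)\le \max(\text{dimension of the affines})+\ara_h(I)-1$, i.e. with ``$n$'' there being $\dim X=2n$; so $\ecd(U)\leq 2n+\ara_h(I)-1$. To extract $2n-1$ one needs the stronger lower bound $\ecd(U)\geq 4n-2$, which Theorem \ref{13} does supply if one can show the restriction on $H^1$ fails — but $H^1(\PP^n_{\et},\ZZ/l\ZZ)=0=H^1(X_{\et},\ZZ/l\ZZ)$, so that does not help directly either. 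The right statement is that $\ecd(U)\geq 4n-2$ iff some restriction map in degree $<2$ fails or the degree-$2$ map fails to be injective; since $H^2(X_{\et},\ZZ/l\ZZ)$ has rank $2$ while $H^2(\Delta_{\et},\ZZ/l\ZZ)$ has rank $1$, the degree-$2$ restriction map is not injective, so taking $r=2$ in Theorem \ref{13} we get $\ecd(U)\geq 2N-2=4n-2$. Feeding this into (\ref{ecd}) gives $4n-2\leq 2n+\ara_h(I)-1$, i.e. $\ara_h(I)\geq 2n-1$.

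For the upper bound I would exhibit $2n-1$ explicit homogeneous polynomials in $A=k[x_iy_j]$ whose radical is $I=(x_iy_j-x_jy_i: 0\leq i<j\leq n)$. The natural candidates are ``diagonal sums'' of the $2\times 2$ minors: for $k=1,\dots,2n-1$ set $g_k=\sum_{i+j=k,\,i<j} c_{ij}(x_iy_j-x_jy_i)$ for suitable coefficients $c_{ij}\in k$ (the scheme-theoretic generators $x_iy_j-x_jy_i$ group naturally by the value $i+j\in\{1,\dots,2n-1\}$). One then checks that $\sqrt{(g_1,\dots,g_{2n-1})}=I$: the inclusion $(g_1,\dots,g_{2n-1})\subseteq I$ is clear, and for the reverse one argues on $\V_+(g_1,\dots,g_{2n-1})$ that any point where all $g_k$ vanish must have all minors vanish, hence lies on $\Delta$. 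A convenient way to see this is to localize at each standard affine chart $x_s\neq 0$ (or $y_s\neq 0$) and perform a triangular elimination: in the chart $x_0\neq 0$, the equation $g_1=0$ (up to a unit) expresses $y_1/y_0$ (equivalently $x_1/x_0$) in terms of the diagonal coordinates, then $g_2=0$ expresses the next coordinate, and so on, an induction showing that on this chart the zero locus coincides with $\Delta$. This is essentially the "staircase" technique of Bruns--Schwänzl and of Singh--Walther; the combinatorial bookkeeping of which coefficients to pick is the only real work.

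The main obstacle is the upper bound: one must choose the coefficients $c_{ij}$ so that, in every affine chart, the resulting system is genuinely triangular (so that each successive $g_k=0$ pins down one more ratio of coordinates without introducing spurious components at infinity or along coordinate subspaces). In the classical $n=1$ case this is trivial ($2n-1=1$, a single quadric $x_0y_1-x_1y_0$), and for the Veronese-type or Segre-type embeddings handled in the literature the analogous choice is known; I expect one can write down $c_{ij}=1$ (or alternating signs) and verify the triangularity by a direct but slightly tedious chart-by-chart computation, handling separately the locus where several consecutive coordinates vanish. A cleaner alternative, which I would try first, is to invoke the general principle that for an ideal generated by the $2\times 2$ minors of a $2\times(n+1)$ matrix of indeterminates — i.e. the ideal of the rational normal scroll / Segre $\PP^1\times\PP^n$ pattern reorganized — the arithmetical rank is the number of "diagonals", which is exactly $2n-1$; this is in the spirit of Bruns--Schwänzl \cite{bruns-schwanzl}, and adapting their argument to the present matrix $\begin{pmatrix} x_0 & x_1 & \cdots & x_n\\ y_0 & y_1 & \cdots & y_n\end{pmatrix}$ should give the bound directly.
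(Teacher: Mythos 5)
Your lower bound is correct and is essentially the paper's argument: compare $H^2((\PP^n\times\PP^n)_{\et},\ZZ/l\ZZ)\cong(\ZZ/l\ZZ)^2$ (K\"unneth) with $H^2(\Delta_{\et},\ZZ/l\ZZ)\cong\ZZ/l\ZZ$, note the restriction map cannot be injective, apply Theorem \ref{13} with $r=2$ to get $\ecd(U)\geq 4n-2$, and feed this into (\ref{ecd}) with ambient dimension $2n$ to obtain $\ara_h(I)\geq 2n-1$. The detours in your write-up (trying $r=2n$, misreading (\ref{ecd})) are harmless since you end up at the correct reading.

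The upper bound, however, is not actually proved in your proposal, and the missing step is exactly the point the paper has to address. You correctly guess the candidate equations $g_k=\sum_{i+j=k,\,i<j}(x_iy_j-x_jy_i)$, but your ``cleaner alternative'' — invoking Bruns--Schw\"anzl for the $2\times(n+1)$ matrix with rows $(x_0,\ldots,x_n)$ and $(y_0,\ldots,y_n)$ — only yields $\sqrt{(g_1,\ldots,g_{2n-1})R}=I_2R$ inside the full polynomial ring $R=k[x_0,\ldots,x_n,y_0,\ldots,y_n]$, i.e.\ it computes the arithmetical rank of the Segre variety $\PP^1\times\PP^n\subseteq\PP^{2n+1}$. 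The corollary concerns the ideal $I$ inside the Segre ring $A=k[x_iy_j]$ of $\PP^n\times\PP^n$, so one still has to descend the radical statement from $R$ to $A$. The paper does this by observing that the $g_k$ lie in $A$ and that $A$ is a direct summand of $R$ as an $A$-module: an equation $f^m=\sum h_ig_i$ with $h_i\in R$ projects, via the $A$-linear retraction $R\to A$, to $f^m\in(g_1,\ldots,g_{2n-1})A$, giving $\sqrt{(g_1,\ldots,g_{2n-1})A}=I$. (Alternatively one can argue on closed points via the Nullstellensatz, but some such bridge is required, and it is what makes the argument work over any field rather than only algebraically closed ones.) Your first route, the chart-by-chart ``triangular elimination'' on $\PP^n\times\PP^n$ with unspecified coefficients $c_{ij}$, is only sketched (``I expect one can \ldots verify''), so as written neither route establishes $\ara_h(I)\leq 2n-1$; adding the direct-summand descent would close the gap and would make your proof coincide with the paper's.
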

\begin{proof}
As already said, by \cite[Proposition 2.1.1]{spe} we already know that $\ara_h(I)\leq 2n-1$. However we can observe that, if we consider $IR\subseteq R=k[x_i,y_j:i,j=0, \ldots ,n]$, then $IR$ is the ideal generated by the 2-minors of the $2\times (n+1)$ matrix of indeterminates whose rows are, respectively, $x_0, \ldots ,x_n$ and $y_0,\ldots ,y_n$. So, by \cite[(5.9) Lemma]{bruns-vetter}, a set of generators of $IR\subseteq R$ up to radical is
\[g_k = \sum_{{0\leq i<j\leq n}\atop {i+j=k}}(x_iy_j-x_jy_i), \ \ \ k=1, \ldots ,2n-1.\]
Since these polynomials belong to $A$ and since $A$ is a direct summand of $R$, we get 
\[ \sqrt{(g_1, \ldots ,g_{2n-1})A}=I, \]
therefore $\ara_h(I)\leq 2n-1$.

For the lower bound choose $l$ coprime with $\chara(k)$. K\"unneth formula for \'etale cohomology (\cite[Chapter VI, Corollary 8.13]{milne}) implies that 
\[H^2(\PP_{\et}^n\times \PP_{\et}^n, \ZZ/l\ZZ) \cong (\ZZ/l \ZZ)^2,\]
while $H^2(\Delta_{\et} , \ZZ/l\ZZ)\cong H^2(\PP_{\et}^n , \ZZ/l\ZZ)\cong \ZZ/l\ZZ$. So Theorem  \ref{13} yields $\ecd(U)\geq 4n-2$, where $U=\PP^n \times \PP^n \setminus \Delta$. Therefore equation (\ref{ecd}) yields the conclusion.
\end{proof}

The next two propositions are the analogue of Propositions \ref{mah} and \ref{15}. We need them to compute the homogeneous arithmetical rank of certain Segre products in arbitrary characteristic. First we need a remark:

\begin{os}
Let $X$ be a projective variety smooth over a field $k$ and $l$ an integer coprime to $\chara(k)$. The kernel of the cycle map is contained in the kernel of the projection from the Chow ring to itself modulo numerical equivalence. But this last group is non-zero because $X$ is projective, so we have
\begin{equation}\label{bettietale}
H^{2i}(X_{\et},\ZZ_l) \neq 0 \ \ \ \forall \ i=0, \ldots , \dim X .
\end{equation}
Therefore there exists an integer $n$ such that $H^{2i}(X_{\et},\ZZ/l^n\ZZ)$ is non-zero for any $i=0,\ldots ,\dim X$.
See the proof of \cite[Chapter VI, Theorem 11.7]{milne}.
\end{os}

\begin{prop}\label{14}
Let $k$ an algebraic closed field of arbitrary characteristic. Let $X$ and $Y$ be two projective varieties smooth over $k$ of dimension at least 1. Set $Z=X \times Y \subseteq \PP^N$ (any embedding) and $U=\PP^N \setminus Z$. Then $\ecd(U) \geq 2N-3$. In particular if $\dim Z \geq 3$, $Z$ is not a set-theoretic complete intersection.
\end{prop}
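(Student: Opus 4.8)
The plan is to transcribe, almost verbatim, the proof of Proposition~\ref{mah} into the \'etale world: the $\ZZ/l\ZZ$-dimensions $\dim_{\ZZ/l\ZZ}H^i(-_{\et},\ZZ/l\ZZ)$ play the role of the Betti numbers $\beta_i$, the remark preceding this proposition plays the role of equation~(\ref{betti3}), and Lyubeznik's Theorem~\ref{13} plays the role of Theorem~\ref{1}.

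Fix a prime $l$ different from $\chara(k)$. Since $\dim X\geq 1$ and $\dim Y\geq 1$, the remark preceding this proposition gives $H^2(X_{\et},\ZZ_l)\neq 0$ and $H^2(Y_{\et},\ZZ_l)\neq 0$; reducing the coefficients modulo $l$ — these being non-zero finitely generated $\ZZ_l$-modules, Nakayama applies — we also get $H^2(X_{\et},\ZZ/l\ZZ)\neq 0$ and $H^2(Y_{\et},\ZZ/l\ZZ)\neq 0$. Moreover $X$ and $Y$ are connected, so $H^0(X_{\et},\ZZ/l\ZZ)\cong H^0(Y_{\et},\ZZ/l\ZZ)\cong\ZZ/l\ZZ$. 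Because $X$ and $Y$ are proper and $\ZZ/l\ZZ$ is a field, the K\"unneth formula for \'etale cohomology (\cite[Chapter~VI, Corollary~8.13]{milne}) yields
\[
\dim_{\ZZ/l\ZZ}H^2(Z_{\et},\ZZ/l\ZZ)=\sum_{p+q=2}\dim_{\ZZ/l\ZZ}H^p(X_{\et},\ZZ/l\ZZ)\cdot\dim_{\ZZ/l\ZZ}H^q(Y_{\et},\ZZ/l\ZZ)\ \geq\ 2,
\]
the terms with $(p,q)=(2,0)$ and $(p,q)=(0,2)$ each already contributing $1$. This is the exact \'etale analogue of the inequality $\beta_2(Z)\geq 2$ used in Proposition~\ref{mah}.

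Now assume, for contradiction, that $\ecd(U)<2N-3$. As $k$ is algebraically closed, $Z=X\times Y$ is a projective variety, and $U=\PP^N\setminus Z$ is non-singular; hence Theorem~\ref{13}, applied to the pair $Z\subseteq\PP^N$, with coefficients in $\ZZ/l\ZZ$ (coprime to $\chara(k)$) and with $r=3$, tells us that the restriction map
\[
H^2(\PP^N_{\et},\ZZ/l\ZZ)\longrightarrow H^2(Z_{\et},\ZZ/l\ZZ)
\]
is an isomorphism. This is impossible, since the source is one-dimensional over $\ZZ/l\ZZ$ whereas the target has dimension at least $2$ by the previous paragraph. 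Hence $\ecd(U)\geq 2N-3$. For the last assertion, if in addition $\dim Z\geq 3$ and $Z$ were a set-theoretic complete intersection, then $\ara_h(I)=\codim_{\PP^N}Z=N-\dim Z$, and equation~(\ref{ecd}) would force $\ecd(U)\leq N+\ara_h(I)-1=2N-\dim Z-1\leq 2N-4$, a contradiction.

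The only point that requires a little care is the very first one: unlike in characteristic $0$, where positivity of $\beta_2$ follows from Hodge theory, here one cannot simply take $H^2(X_{\et},-)\neq 0$ for granted (the statement of Proposition~\ref{mah} itself fails in positive characteristic, see Remark~\ref{gowa}), and one genuinely needs the input from the cycle class map modulo numerical equivalence recorded in the preceding remark. Once $\ZZ/l\ZZ$ is chosen as coefficient ring the K\"unneth formula becomes an honest isomorphism of graded vector spaces, and everything else is a mechanical translation of the characteristic-zero proof of Proposition~\ref{mah}.
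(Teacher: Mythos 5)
Your proof is correct and follows essentially the same route as the paper's: the cycle-class remark gives non-vanishing of $H^2(X_{\et},\ZZ/l\ZZ)$ and $H^2(Y_{\et},\ZZ/l\ZZ)$, the K\"unneth formula then shows $H^2(Z_{\et},\ZZ/l\ZZ)$ is too large to be isomorphic to $H^2(\PP^N_{\et},\ZZ/l\ZZ)\cong\ZZ/l\ZZ$, and Theorem \ref{13} (with $r=3$) yields $\ecd(U)\geq 2N-3$. The only cosmetic difference is that you pass from $\ZZ_l$ to $\ZZ/l\ZZ$ coefficients via Nakayama with $l$ prime, whereas the paper simply invokes non-vanishing modulo some power $l^n$; both are fine.
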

\begin{proof}
By the above remark there is an integer $l$ coprime with $\chara(k)$ such that the modules $H^i(X_{\et},\ZZ/l \ZZ)$ and $H^i(Y_{\et},\ZZ/l \ZZ)$ are non-zero $\ZZ/l\ZZ$-modules. But $H^2(\PP_{\et}^N,\ZZ/l \ZZ)$ $\cong$ $ \ZZ/l\ZZ$, therefore by K\"unneth formula for \'etale cohomology (\cite[Chapter VI, Corollary 8.13]{milne}) it follows that $H^2(Z_{\et},\ZZ/l \ZZ)$ cannot be isomorphic to $H^2(\PP^N_{\et},\ZZ/l \ZZ)$. Now Theorem \ref{13} implies the conclusion.
\end{proof}

\begin{prop}\label{curve}
Let $k$ an algebraically closed field, $C$ a smooth projective curve of positive genus, $X$ a projective scheme and $Y=C \times X \subseteq \PP^N$ (any embedding). Then $\ecd(\PP^N \setminus Y) \geq 2N-2$. In particular, if $\dim X \geq 1$, then $Y$ is not a set-theoretic complete intersection.
\end{prop}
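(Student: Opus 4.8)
The plan is to follow the proof of Proposition~\ref{15}, replacing singular cohomology by $l$-adic étale cohomology and Hartshorne's comparison theorem \cite[Theorem 7.4]{hartshorne4} by Lyubeznik's Theorem~\ref{13}. Set $U=\PP^N\setminus Y$. Since $U$ is an open subscheme of the smooth scheme $\PP^N$ it is non-singular; moreover the étale site (hence $\ecd$) and the scheme $U$ itself are insensitive to nilpotents, so I may and do assume $Y$ (hence also $X$) reduced, which puts us inside the hypotheses of Theorem~\ref{13} with ambient $\PP^N$ and closed subscheme $Y$.

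Fix a prime number $l$ coprime to $\chara(k)$. I claim the restriction map $H^1(\PP^N_{\et},\ZZ/l\ZZ)\to H^1(Y_{\et},\ZZ/l\ZZ)$ is \emph{not} an isomorphism. On the one hand $H^1(\PP^N_{\et},\ZZ/l\ZZ)=0$, since $\PP^N$ is simply connected. On the other hand $H^1(Y_{\et},\ZZ/l\ZZ)\neq 0$: as $X$ is a non-empty scheme of finite type over the algebraically closed field $k$ it has a $k$-rational point $x_0$, so the first projection $p\colon Y=C\times X\to C$ admits the section $c\mapsto(c,x_0)$; hence $p^{*}\colon H^1(C_{\et},\ZZ/l\ZZ)\to H^1(Y_{\et},\ZZ/l\ZZ)$ is (split) injective, and $H^1(C_{\et},\ZZ/l\ZZ)\cong(\ZZ/l\ZZ)^{2g}\neq 0$ because the genus $g$ of $C$ is positive. (Alternatively one could invoke the Künneth formula for étale cohomology as in the proof of Proposition~\ref{14}.)

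Now I would apply Theorem~\ref{13} with $r=2$. If $\ecd(U)<2N-2$ held, the theorem would force the restriction maps $H^i(\PP^N_{\et},\ZZ/l\ZZ)\to H^i(Y_{\et},\ZZ/l\ZZ)$ to be isomorphisms for every $i<2$, in particular for $i=1$, contradicting the previous paragraph. Therefore $\ecd(U)\geq 2N-2$. For the last assertion, let $I\subseteq k[x_0,\dots,x_N]$ be the homogeneous ideal defining $Y$. Equation~(\ref{ecd}) of Remark~\ref{etale} gives $\ecd(U)\leq N+\ara_h(I)-1$, whence $\ara_h(I)\geq N-1$; if $Y$ were a set-theoretic complete intersection we would have $\ara_h(I)=\codim_{\PP^N}Y=N-\dim Y=N-1-\dim X$, which is $\leq N-2$ once $\dim X\geq 1$, a contradiction.

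The only point needing genuine care is the non-vanishing $H^1(Y_{\et},\ZZ/l\ZZ)\neq 0$ for an \emph{arbitrary} projective scheme $X$ (possibly singular, reducible, or --- before the reduction --- non-reduced): this is exactly why I prefer the section argument to a blunt use of Künneth, and why it is worth reducing to $Y$ reduced at the outset so that Theorem~\ref{13} applies on the nose. Everything else is a formal deduction from Theorem~\ref{13} and the inequality~(\ref{ecd}), in complete parallel with Propositions~\ref{mah}, \ref{15} and \ref{14}.
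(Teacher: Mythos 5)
Your proof is correct and follows essentially the same route as the paper: nonvanishing of $H^1(Y_{\et},\ZZ/l\ZZ)$ against $H^1(\PP^N_{\et},\ZZ/l\ZZ)=0$, then Theorem \ref{13} with $r=2$ and inequality (\ref{ecd}). The only difference is that the paper obtains $H^1(Y_{\et},\ZZ/l\ZZ)\neq 0$ from $H^1(C_{\et},\ZZ/l\ZZ)\cong(\ZZ/l\ZZ)^{2g}$ via the K\"unneth formula, whereas you use the split injection coming from a section of the projection $Y\to C$ --- a harmless (indeed slightly more robust, for arbitrary projective $X$) variant of the same step.
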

\begin{proof}
Set $g$ the genus of $C$. By \cite[Proposition 14.2 and Remark 14.4]{milnel} it follows that $H^1(C_{\et},\ZZ/l\ZZ)\cong (\ZZ/l\ZZ)^{2g}$. Moreover $H^0(X_{\et},\ZZ/l \ZZ)\neq 0$ and $H^1(\PP^N_{\et},\ZZ/l \ZZ)=0$. But by K\"unneth formula for \'etale cohomology $H^1(Y_{\et},\ZZ/l \ZZ)\neq 0$, therefore Theorem \ref{13} let us conclude.
\end{proof}

\subsection{Consequences on the cohomological dimension}

In this subsection we draw two nice consequences of the investigations we made in the first part of the work. They are in the direction of a problem stated by Grothendieck, who asked in \cite[p. 79]{gro1} to find conditions, fixed a positive integer $c$, under which $\cd(R,I)\leq c$, where $I$ is an ideal in a ring $R$.

The first fact we want to present is a consequence of Theorem \ref{1}, and regards a relationship between cohomological dimension of an ideal in a polynomial ring  and the depth of the relative quotient ring.
It was proved by Peskine and Szpiro in \cite{PS} that if $I \subseteq S=k[x_1, \ldots ,x_n]$ is a homogeneous ideal of a polynomial ring over a field of positive characteristic such that $\depth(S/I)\geq t$, then $\cd(S,I)\leq n-t$. The same assertion does not hold in characteristic $0$, in fact examples are known already for $t=4$ (for instance if $I$ defines the Segre product of two projective spaces). When $t=2$ the statement is true also in characteristic $0$ by a result of Hartshorne and Speiser (for instance see \cite{hartshorne4}). We can settle the case $t=3$ in the smooth case.

\begin{thm}\label{depth}
Let $S=k[x_1,\ldots ,x_n]$ be a polynomial ring over a field of characteristic $0$. If $I\subseteq S$ is a homogeneous prime ideal such that $(S/I)_{\wp}$ is a regular local ring for any homogeneous prime ideal $\wp \neq \mm=(x_1, \ldots ,x_n)$ and such that $\depth(S/I)\geq 3$, then $\cd(S,I)\leq n-3$.
\end{thm}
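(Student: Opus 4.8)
The plan is to pass to the projective picture and apply Theorem \ref{1}. Write $R=S/I$, let $\mm$ be the irrelevant ideal, and set $X=\Proj(R)\subseteq\PP^{n-1}$ and $U=\PP^{n-1}\setminus X$. Since $R_\wp$ is a regular local ring for every homogeneous prime $\wp\neq\mm$ and $k$ is perfect (being of characteristic $0$), the scheme $X$ is smooth over $k$; moreover $\depth(R)\geq 3$ forces $\dim R\geq 3$, so $\dim X\geq 2$ and $\codim_{\PP^{n-1}}X\leq n-3$. By the identity (\ref{cd}) one has $\cd(S,I)=\cd(U)+1$, so the assertion to be proved is equivalent to $\cd(U)<n-3$. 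I would obtain this from Theorem \ref{1} applied with $r=n-3$, which is an admissible choice since $r\geq\codim_{\PP^{n-1}}X$. Because here $(n-1)-r=2$, the only Hodge numbers that enter the criterion of Theorem \ref{1} are those with $p+q<2$, so it suffices to verify the three conditions $h^{00}(X)=1$ and $h^{10}(X)=h^{01}(X)=0$.

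The two conditions involving $\O_X$ will be read off from the depth hypothesis by means of the standard exact sequence and isomorphisms relating the local cohomology of $R$ at $\mm$ to the cohomology of coherent sheaves on $X$, namely
\[ 0\to H^0_\mm(R)\to R\to\bigoplus_{d\in\ZZ}H^0(X,\O_X(d))\to H^1_\mm(R)\to 0 \]
and $H^i_\mm(R)_d\cong H^{i-1}(X,\O_X(d))$ for all $i\geq 2$ and all $d\in\ZZ$. As $\depth(R)\geq 3$, we have $H^0_\mm(R)=H^1_\mm(R)=H^2_\mm(R)=0$. The first two vanishings yield $H^0(X,\O_X)=R_0=k$, whence $h^{00}(X)=1$; the vanishing of $H^2_\mm(R)$ yields $H^1(X,\O_X(d))=0$ for every $d\in\ZZ$, and in particular $h^{01}(X)=\dim_k H^1(X,\O_X)=0$.

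The last equality $h^{10}(X)=0$ I would deduce by Hodge symmetry: by Remark \ref{0} (equation (\ref{redtocomplex2})) the Hodge numbers $h^{pq}(X)$ are unaffected by the base change to $\CC$, and on the smooth complex projective variety $X_\CC$ the associated complex manifold $(X_\CC)_{an}$ is K\"ahler, so (as recalled in Remark \ref{betti}) $h^{10}(X_\CC)=h^{01}(X_\CC)$; hence $h^{10}(X)=h^{01}(X)=0$. With all three conditions checked, Theorem \ref{1} gives $\cd(U)<n-3$, and then (\ref{cd}) gives $\cd(S,I)\leq n-3$, as wanted.

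I do not anticipate a serious obstacle here: the whole argument is a dictionary translation through Theorem \ref{1} and the Serre--Grothendieck correspondence. The only point deserving care is the numerology of Theorem \ref{1} --- verifying that $r=n-3$ is a legitimate value (this is where $\depth(R)\geq 3$ is first used, via $\dim X\geq 2$) and noticing that for this $r$ the Hodge-number conditions collapse precisely to ``$X$ is connected and $H^1(X,\O_X)=0$'' --- together with the observation that $\depth(R)\geq 3$ is exactly what is needed to make $H^2_\mm(R)$, equivalently $H^1(X,\O_X(d))$ for all $d$, vanish.
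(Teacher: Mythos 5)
Your proof is correct and follows essentially the same route as the paper: Theorem \ref{1} applied with $r=n-3$ in $\PP^{n-1}$, the identification of $H^1(X,\O_X)$ with a graded piece of $H^2_{\mm}(S/I)$ so that $\depth(S/I)\geq 3$ kills it, and Hodge symmetry obtained by the reduction to $\CC$ of Remark \ref{0}. The only difference is presentational: the paper runs the argument by contradiction (from $\cd(S,I)\geq n-2$ it deduces $h^{10}(X)\neq 0$ or $h^{01}(X)\neq 0$ and then $H^2_{\mm}(S/I)\neq 0$), whereas you verify the Hodge-number conditions directly.
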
 
\begin{proof}
Suppose by contradiction that $\cd(S,I)\geq n-2$. Set $X=\Proj(S/I)\subseteq \PP^{n-1}=\Proj(S)$. So we are supposing that $\cd(\PP^{n-1}\setminus X)\geq n-3$ by equation (\ref{cd}). By the assumptions $X$ is a projective variety smooth over $k$, therefore Theorem \ref{1} implies that $h^{10}(X)\neq 0$ or that $h^{01}(X)\neq 0$. But with the notation of Remark \ref{0}, $h^{10}(X)=h^{10}(X_{\CC})$ and $h^{01}(X)=h^{01}(X_{\CC})$. So, since $h^{10}(X_{\CC})=h^{01}(X_{\CC})$ (using \cite[Theorem 10.2.4]{arap} and \cite{GAGA} together), we have $h^{10}(X)\neq 0$. But $H^1(X,\O_X)=[H_{\mm}^2(S/I)]_0 \subseteq H_{\mm}^2(S/I)$ ($[ \ ]_0$ denotes the $0$-degree part), so $\depth(S/I)\leq 2$, that is a contradiction.
\end{proof}

Actually the cited result of Peskine and Szpiro holds true whenever the ambient is a regular local ring of positive characteristic. Moreover, one can easily deduce by the result of Huneke and Lyubeznik \cite[Theorem 2.9]{hu-ly} the following: If $R$ is an $n$-dimensional regular local ring containing its residue field and $\aa\subseteq R$ is an ideal such that $\depth(R/\aa)\geq 2$, then $\cd(R,\aa)\leq n-2$. Together with these facts, Theorem \ref{depth} raises the following question:

\begin{qs}\label{depth-coho}
Suppose that $R$ is a regular local ring, and that $I\subseteq R$ is an ideal such that $\depth(R/I)\geq 3$. Is it true that $\cd(R,I)\leq \dim R -3$?
\end{qs}

The second fact we want to show is a consequence of Theorem \ref{1} and Theorem \ref{13}. It provides a solution of a special case of a conjecture stated by Lyubeznik in \cite[Conjecture, p. 147]{ly2}:

\begin{conj}(Lyubeznik)\label{conjly}
If $U$ is a $n$-dimensional scheme of finite type over a separably closed field, then $\ecd(U)\geq n+\cd(U)$.
\end{conj}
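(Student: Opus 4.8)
The plan is to convert a quasi-coherent sheaf witnessing $\cd(U)=c$ into a nonvanishing \emph{torsion étale} class in the higher degree $n+c$, where $n=\dim U$; this is exactly what $\ecd(U)\geq n+c$ asserts. Before doing so I would fix the shape of the bound on the base case $U$ affine: there $\cd(U)=0$ by Serre's criterion, while $\ecd(U)=n$ — the upper bound $\ecd(U)\leq n$ is \cite[Chapter VI, Theorem 7.2]{milne}, and the lower bound $\ecd(U)\geq n$ is achieved on a suitable non-constant torsion sheaf of the form $j_!(\ZZ/l\ZZ)$, with $j$ an open immersion whose complement meets one-dimensional slices in several points — so the conjecture holds with equality and the extra ``$+n$'' is visibly the étale cohomological dimension of the affine charts. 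I would also reduce at the outset to the case that $U$ is an integral variety of dimension $n$ (passing to a top-dimensional irreducible component, on which $\cd$ is attained, using Mayer--Vietoris for both theories), and, when $\chara(k)=0$, to $k=\CC$ by the flat base change of Remark \ref{0}.

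The core of the argument, in characteristic $0$, is to route the two cohomological dimensions through a common topological invariant. Fixing $c=\cd(U)$ and choosing a smooth compactification $U\hookrightarrow \bar U$ with normal-crossings boundary $D$, I would compute the algebraic De Rham cohomology of $U$ as the hypercohomology of $\Omega^{\bullet}_{\bar U}(\log D)$, for which the Hodge-to-De Rham spectral sequence $E_1^{p,q}=H^{q}(\bar U,\Omega^{p}_{\bar U}(\log D))$ degenerates at $E_1$. The key lemma to establish is that $\cd(U)=c$ forces the corner term in bidegree $(n,c)$ to contribute a nonzero graded piece to $H^{n+c}_{DR}(U)$: the equality $\cd(U)=c$ makes all groups $H^{q}(U,-)$ with $q>c$ vanish, so the differentials entering the corner from higher $q$ die, and an Ogus-type identification (in the spirit of Theorem \ref{1} and \cite{ogus}) relates this top coherent cohomology to the nonvanishing of $H^{n+c}_{DR}(U)$. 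Granting this, Grothendieck's comparison \cite{gro} identifies $H^{n+c}_{DR}(U)$ with singular cohomology $H^{n+c}(U_{an},\CC)$, and Artin's comparison identifies the étale groups $H^{n+c}(U_{\et},\ZZ/l\ZZ)$ with the corresponding topological groups; nonvanishing over $\CC$ then yields a nonzero torsion étale class for suitable $l$, i.e. $\ecd(U)\geq n+c$. This would reprove Theorem \ref{lyub} and, combined with the reductions above, settle the conjecture for smooth $U$ in characteristic $0$.

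The hard part — and the reason the full conjecture stays out of reach — is that both hypotheses in the preceding step are used essentially. Smoothness is needed for the log De Rham complex, for the degeneration of the spectral sequence, and for the Ogus comparison between $\cd(U)$ and De Rham cohomology; a singular $U$ would require a Hodge-theoretic input on a resolution together with precise control of how $\cd$ behaves under it, which I do not see how to supply in general. Positive characteristic is worse: there is no singular cohomology to compare against, the Hodge-to-De Rham spectral sequence need not degenerate, and the phenomena genuinely differ — Remark \ref{elliptic} already shows that the coherent cohomological dimension can drop under Frobenius while torsion étale cohomology does not. Thus a proof of the conjecture as literally stated, over a separably closed field of arbitrary characteristic and for arbitrary $U$, seems to demand a direct, characteristic-free comparison between coherent and torsion étale cohomological dimensions — for instance a duality relating the top coherent cohomology of $U$ to the compactly supported étale cohomology of a compactification, feeding into Theorem \ref{13} — rather than the Hodge-theoretic bridge available only in the smooth characteristic-$0$ case.
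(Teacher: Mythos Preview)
The statement you are attempting is a \emph{conjecture}, not a theorem: the paper does not prove it, and in fact records (immediately after Theorem~\ref{lyub}) that Lyubeznik has found a counterexample in positive characteristic, with $U$ the complement in $\PP^n$ of a reducible projective scheme. So no argument can succeed for the conjecture ``as literally stated''; the most one can hope for is the smooth characteristic-$0$ situation you isolate, and the paper itself only establishes the much more restrictive special case $U=\PP^n\setminus X$ with $X$ smooth over $\CC$ (Theorem~\ref{lyub}).

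Your route to that special case, however, contains a genuine gap. The ``key lemma'' --- that $\cd(U)=c$ forces $H^{n+c}_{DR}(U)\neq 0$ --- is false already for smooth affine-fibred varieties: take $U=\mathbb{A}^3\setminus L$ with $L$ a line, so $U\cong \mathbb{A}^1\times(\mathbb{A}^2\setminus\{0\})$. Here $n=3$ and $\cd(U)=1$ (two affines cover $U$, and $H^1(U,\O_U)\neq 0$), but over $\CC$ the analytic space retracts onto $S^3$, whence $H^{4}_{DR}(U)=0$. There is also a confusion in the supporting spectral-sequence argument: the terms $E_1^{p,q}=H^{q}(\bar U,\Omega^{p}_{\bar U}(\log D))$ are cohomology groups on the \emph{compactification} $\bar U$, so the vanishing $H^{q}(U,-)=0$ for $q>c$ coming from $\cd(U)=c$ does not kill them, nor does it by itself force $E_1^{n,c}\neq 0$. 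The Ogus input you invoke is specifically a statement about complements of closed subschemes in a fixed smooth projective ambient, not about arbitrary smooth $U$.

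By contrast, the paper's proof of Theorem~\ref{lyub} never passes through $H^{*}_{DR}(U)$. It works entirely on the compact side: from $\cd(U)=s$ and Theorem~\ref{1} it extracts the inequality $\beta_{n-s-1}(X)>\rho_s$ on the Betti numbers of the \emph{closed} smooth projective variety $X$, pushes this to $\ZZ/p\ZZ$-coefficients via universal coefficients, transfers to \'etale cohomology of $X$ by the comparison theorem, and then invokes Theorem~\ref{13} to bound $\ecd(U)$ from below. This avoids exactly the failure above, at the price of the hypothesis $U=\PP^n\setminus X$; your attempt to trade that hypothesis for a log-De~Rham argument on a general smooth $U$ does not go through.
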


\begin{thm}\label{lyub}
Let $X \subseteq \PP^n$ be a projective variety smooth over $\CC$, and $U=\PP^n \setminus X$. Then
\[\ecd(U)\geq n + \cd(U)\]
\end{thm}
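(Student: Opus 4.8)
The plan is to compare the known "if and only if" characterizations of $\cd(U) < r$ (Theorem \ref{1}) and of a bound on $\ecd(U)$ (Theorem \ref{13}), after reducing to a setting where both apply. First I would reduce to the case $k = \CC$, which by Remark \ref{0} (equations (\ref{redtocomplex1}) and (\ref{redtocomplex2})) does not change $\cd(U)$ nor the relevant cohomology: we may assume $X \subseteq \PP^n_{\CC}$ is smooth, and $U = \PP^n \setminus X$ is then a smooth variety over $\CC$. Write $c = \codim_{\PP^n} X$, so $\dim X = n - c$ and $\dim U = n$; recall $\ecd(U) \le 2n$ always.

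Next I would set $r_0 = \cd(U)$, so by definition $\cd(U) < r_0 + 1$ but $\cd(U) \not< r_0$. Applying the "only if" direction of Theorem \ref{1} with $r = r_0 + 1$ gives that $\beta_i(X) = 1$ for $i$ even and $\beta_i(X) = 0$ for $i$ odd, in the range $i < n - r_0 - 1$; applying the failure of the conditions at $r = r_0$ (again via Theorem \ref{1}, the "if" direction read contrapositively) produces some $i_0$ with $n - r_0 - 1 \le i_0 < n - r_0$ — i.e.\ $i_0 = n - r_0 - 1$ — at which the Betti/Hodge condition is violated: either $\beta_{i_0}(X) \ne 1$ with $i_0$ even, or $\beta_{i_0}(X) \ne 0$ with $i_0$ odd. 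In other words, for all $i < i_0$ the restriction $H^i_{Sing}(\PP^n_{an}, \CC) \to H^i_{Sing}(X_{an}, \CC)$ is an isomorphism, but for $i = i_0$ it is not an isomorphism (while still being injective, by Remark \ref{betti}).

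Then I would transport this to étale cohomology. Since $i_0 < n - r_0 \le \dim X$ for the relevant range, the comparison theorems identify the behaviour of the mod-$\ell$ étale restriction maps $H^i(\PP^n_{\et}, \ZZ/\ell\ZZ) \to H^i(X_{\et}, \ZZ/\ell\ZZ)$ with that of the singular ones: for a suitable $\ell$ and power, these are isomorphisms for $i < i_0$ and fail to be an isomorphism at $i = i_0$ (using that the singular cohomology of $\PP^n$ and of $X$ in this range is torsion-free and matches its mod-$\ell$ reduction, and that $X$ is projective so the even cohomology is non-vanishing as in equation (\ref{bettietale})). Now invoke Theorem \ref{13} in contrapositive form: if $\ecd(U) < 2n - i_0$, then the restriction maps would be isomorphisms for all $i < i_0 + 1$, in particular an isomorphism at $i = i_0$ — contradiction. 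Hence $\ecd(U) \ge 2n - i_0 = 2n - (n - r_0 - 1) = n + r_0 + 1 = n + \cd(U) + 1 \ge n + \cd(U)$, which is the desired inequality.

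The main obstacle I expect is the careful bookkeeping in the third step: matching the precise range of indices in which Theorem \ref{1} gives information with the precise range in which Theorem \ref{13} is applicable, and ensuring the mod-$\ell$ étale restriction map genuinely fails to be an isomorphism at $i_0$ rather than merely being non-surjective in a way that the torsion could repair. One must check that the Hodge-theoretic obstruction at $i_0$ — a nonzero class in $H^{i_0}(X)$ not coming from $\PP^n$, or a missing class — survives reduction mod $\ell$ for infinitely many $\ell$, which is where the projectivity of $X$ and the non-vanishing of cycle classes (equation (\ref{bettietale})) are essential. A degenerate edge case to handle separately is when $n - r_0 - 1 < 0$, i.e.\ $\cd(U) \ge n$; but then $\cd(U) = \dim U = n$ forces, via Theorem \ref{1} applied with $r = c$, that already $\beta_i(X)$ deviates at some small $i$, and in fact $\ecd(U) = 2n \ge n + \cd(U)$ holds trivially since $\cd(U) \le n$. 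Once these points are pinned down the inequality follows formally.
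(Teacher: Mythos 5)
Your overall strategy is the same as the paper's: use Theorem \ref{1} at the two levels $r=\cd(U)+1$ and $r=\cd(U)$ to locate a Betti-number anomaly exactly in degree $i_0=n-\cd(U)-1$, transport it to \'etale cohomology with finite coefficients, and conclude via Theorem \ref{13}. However, two steps do not work as written. First, you misquote Theorem \ref{13}: the hypothesis $\ecd(U)<2n-i_0$ (i.e. $r=i_0$) only yields isomorphisms for $i<i_0$ and injectivity at $i=i_0$, not an isomorphism at $i_0$; so your intermediate conclusion $\ecd(U)\geq 2n-i_0=n+\cd(U)+1$ does not follow, and it is in fact false in general: for $X\subseteq\PP^2$ a smooth cubic, $U$ is affine, hence $\cd(U)=0$ and $\ecd(U)\leq 2$, while $2n-i_0=3$, even though the restriction map does fail to be an isomorphism at $i_0=1$. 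The failure of the isomorphism at $i_0$ has to be fed into Theorem \ref{13} with $r=i_0+1$, which gives precisely $\ecd(U)\geq 2n-(i_0+1)=n+\cd(U)$, as in the paper.

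Second, the step you yourself flag as the main obstacle --- passing from the failure of isomorphy with $\CC$-coefficients to a failure for $\ZZ/\ell\ZZ$-\'etale cohomology --- is left unresolved, and the sketched fix is not sound: the integral cohomology of $X$ need not be torsion-free in the relevant range, and equation (\ref{bettietale}) plays no role here. Likewise, the injectivity at $i_0$ that you quote from Remark \ref{betti} is only available for $\CC$-coefficients (mod $\ell$ it can fail), but it is also not needed. The paper's solution is a dimension count that avoids analyzing the map: from $\beta_{i_0}(X)>\rho$, where $\rho=\dim_{\ZZ/p\ZZ}H^{i_0}(\PP_{\et}^n,\ZZ/p\ZZ)$, the universal coefficient theorem gives a surjection $H_{Sing}^{i_0}(X_{an},\ZZ/p\ZZ)\rightarrow\operatorname{Hom}_{\ZZ}(H_{i_0}^{Sing}(X_{an},\ZZ),\ZZ/p\ZZ)$ whose target has dimension $>\rho$ for every prime $p$ (torsion can only increase the dimension), and the comparison theorem between singular and \'etale cohomology transfers this to $H^{i_0}(X_{\et},\ZZ/p\ZZ)$; since source and target of the restriction map then have different dimensions, it cannot be an isomorphism, and Theorem \ref{13} applies. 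With these two corrections your argument coincides with the paper's proof. (Your final ``edge case'' $\cd(U)\geq n$ is vacuous, since $U$ is non-complete and so $\cd(U)\leq n-1$, and the claim $\ecd(U)=2n$ made there would in any case be unjustified.)
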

\begin{proof} 
Set $\cd(U)=s$, and define an integer $\rho_s$ to be $0$ (resp. $1$) if $n-s-1$ is odd (resp. if $n-s-1$ is even). By Theorem \ref{1} and by equation (\ref{betti3}), it follows that $\beta_{n-s-1}(X)>\rho_s$. Consider, for a prime number $p$, the $\ZZ/p\ZZ$-vector space $\operatorname{Hom}_{\ZZ}(H_i^{Sing}(X_{an},\ZZ), \ZZ/p\ZZ)$. Since $H_i^{Sing}(X_{an},\ZZ)$ is of rank bigger than $\rho_s$, the above $\ZZ/p\ZZ$-vector space has dimension greater than $\rho_s$. Therefore by the surjection given by the universal coefficient theorem 
\[ H_{Sing}^{n-s-1}(X_{an},\ZZ/p\ZZ) \longrightarrow \operatorname{Hom}_{\ZZ}(H_{n-s-1}^{Sing}(X_{an},\ZZ), \ZZ/p\ZZ)\] 
(see \cite[Theorem 3.2, p. 195]{hatcher}), 
we infer that $ \dim_{\ZZ/p\ZZ}H_{Sing}^{n-s-1}(X_{an},\ZZ/p\ZZ) > \rho_s$.
Now a comparison theorem due to Grothendieck (see \cite[Chapter III, Theorem 3.12]{milne}) yields 
\[ \dim_{\ZZ/p\ZZ}H^{n-s-1}(X_{\et},\ZZ/p\ZZ)> \rho_s .\] 
Since $\dim_{\ZZ/p\ZZ}(H^{n-s-1}(\PP_{\et}^n,\ZZ/p\ZZ))=\rho_s$, Theorem \ref{13} implies that $\ecd(U) \geq 2n- (n-s)=n+s$.
\end{proof}

Theorem \ref{lyub} might look like a very special case of Conjecture \ref{conjly}. However the case when $U$ is the complement of a projective variety in a projective space is a very important case. In fact the truth of Conjecture \ref{conjly} would ensure that to bound the homogeneous arithmetical rank from below it would be enough to work just with \`etale cohomology, and not with sheaf cohomology. Since usually one is interested in computing the number of (set-theoretically) defining equations of a projective variety in the projective space, in some sense the most interesting case of Conjecture \ref{conjly} is when $U=\PP^n\setminus X$ for some projective variety $X$. From this point of view, one can look at Theorem \ref{lyub} in the following way: {\it In order to give a lower bound for the minimal number of hypersurfaces of $\PP_{\CC}^n$ cutting out set-theoretically a smooth projective variety $X\subseteq \PP_{\CC}^n$, it is better to compute $\ecd(\PP_{\CC}^n\setminus X)$ than $\cd(\PP_{\CC}^n \setminus X)$.}

Unfortunately, Lyubeznik informed the author of this paper by a personal communication that he found a counterexample, yet unpublished, to Conjecture \ref{conjly} when the characteristic of the base field is positive: his counterexample consists in a scheme $U$ which is the complement in $\PP^n$ of a reducible projective scheme.

\section{Upper bounds}\label{upper}

In this section finally we present the defining equations of the varieties described in the introduction. The main tools we use come from ASL theory.

\subsection{Notation}

We want to fix some notation that we will use throughout this section.
Let $k$ be a field of arbitrary characteristic.

We recall that the Segre product of two finitely generated $\NN$-graded $k$-algebra $A$ and $B$ is defined as
\[ A  \sharp B = \bigoplus_{n \in \NN} A_n \otimes_k B_n. \]
This is a $\NN$-graded $k$-algebra and it is a direct summand of the tensor product $A \otimes_k B$.


Fix $n,m$ integers greater than or equal to 1. Then $X \subseteq \PP^n$ and $Y \subseteq \PP^m$ will always denote two projective schemes defined respectively by the standard graded ideals $\aa \subseteq R=k[x_0, \ldots, x_n]$ and $\mathfrak{b}\subseteq S=k[y_0, \ldots, y_m]$. 


Consider the Segre product $Z=X \times Y$ and set $A=R/\aa$ and $B=S/\mathfrak{b}$. Then we have that $Z \cong \Proj (A \sharp B)$. Moreover, if $W:= k[x_iy_j : i=0, \ldots ,n; \ \ j=0, \ldots, m] \subseteq k[x_0, \ldots, x_n, y_0, \ldots , y_m]=R \otimes_k S$, then $A  \sharp B = W/\mathcal{I}$
with $\mathcal{I}\subseteq W$ an homogeneous ideal.
Assuming that $\aa=(f_1, \ldots, f_r)$ and $\mathfrak{b}=(g_1, \ldots, g_s)$ with $\deg f_i=d_i$ and $\deg g_j=e_j$, it is easy to see that $\mathcal{I}$ is generated by the following polynomials:
\begin{compactenum}
\item $M \cdot f_i$ where $M$ varies among the monomials in $S_{d_i}$ for every $i=1, \ldots, r$;
\item $g_j \cdot N$ where $N$ varies among the monomial in $R_{e_j}$ for every $j=1, \ldots, s$.
\end{compactenum}

\vspace{2mm}

Now we present $A \sharp B$ as a quotient of a polynomial ring. So consider $P=k[z_{ij}:i=0, \ldots,n: \ j=0, \ldots,m]$ and the $k$-algebra homomorphism $\phi: P \longrightarrow A \sharp B$
defined as $\phi= \phi' \circ \pi$ where $\phi':P \longrightarrow W$ maps $z_{ij}$ to $x_i y_j$
and $\pi:W \longrightarrow A \sharp B \cong W/\mathcal{I}$ is the projection. Therefore set $I= \Ker \phi$.
With this notation, then, we have
\[ V \cong \Proj (P/I) \subseteq \PP^N, \ \ \ \ N=nm+n+m \]


Now we describe a system of generators which we will use in this section.
For all monomials $M \in S_{d_i}$ (where $i=1, \ldots,r$), choose a polynomial $(f_i)_{M} \in P$ such that $\phi'((f_i)_{M})=M \cdot f_i$.
in the same way choose a polynomial $(g_j)_{N} \in P$ for all monomials $N \in R_{e_j}$ and $j=1, \ldots,s$. Then
it is easy to show that
\[ I=I_2(Z)+J \]
where
\begin{compactenum}
\item $I_2(Z)$ denotes the ideal generated by the 2-minors of the matrix $Z=(z_{ij})$;
\item $J=((f_i)_{M}, (g_j)_{N}: \mbox{ for all }i=1, \ldots, r \mbox{ and for all monomials }M \in S_{d_i},  \mbox{ for} \\ \mbox{all } j=1, \ldots , s \mbox{ and for all monomials } N \in R_{e_j})$.
\end{compactenum}


\subsection{The defining equations of certain Segre products}

Our purpose is to exhibit a minimal set of defining equations (up to radical) for $I$ in $P$, and so to compute the arithmetical rank of $I$. We are able to solve this problem for certain ideals $\aa$ and $\mathfrak{b}$.





We need the following remark to work with algebraically closed fields and to use the Nullstellensatz:

\begin{os}\label{7}
Let $H$ be a $k$-algebra and $\mathfrak{h} \subseteq H$ an ideal. Set $H_{\bar{k}}=H \otimes_k \bar{k}$ and $\mathfrak{h}_{\kk}=\mathfrak{h} H_{\kk} \subseteq H_{\kk}$, where $\kk$ denotes the algebraic closure of $k$. Because $\kk$  is faithfully flat over $k$, if $h_1, \ldots, h_t \in \mathfrak{h}$ are such that $\sqrt{\mathfrak{h}_{\kk}}=\sqrt{(h_1, \ldots, h_t)H_{\kk}}$, then $\sqrt{\mathfrak{h}}=\sqrt{(h_1, \ldots, h_t)}$.  
\end{os}

In the following remark we make use of an argument that we will be used several times later on.
\begin{os}\label{8}
Actually the described generators of $I$ are too much: for instance for a polynomial $f_i$ of the starting ideal we have to consider all the polynomials $(f_i)_M$ with $M$ varying in $S_{d_i}$. These are $\binom{m+d_i}{m}$ polynomials! Anyway, up to radical, it is enough to choose $m+1$ monomials for every $f_i$ and $n+1$ monomials for every $g_j$.

For every $i= 1,\ldots, r$ and $l =0, \ldots, m$, set $M=y_l^{d_i}$. A possible choice for $(f_i)_M$ is $(f_i)_l:=f_i(z_{0l}, \ldots, z_{nl}) \in P$. In the same manner for every $j= 1,\ldots, s$ and $k =0, \ldots, n$ we define $(g_j)_k=g_j(z_{k0}, \ldots, z_{km}) \in P$. We claim that
\[ \sqrt{I}=\sqrt{I_2(Z)+J'} \]
where $J'$ is the ideal generated by the $(f_i)_l$'s and the $(g_j)_k$'s.

We can assume that $k$ is algebraically closed by Remark \ref{7}. So, denoting by $\Z(L)$ the zeroes locus of an ideal $L$, it is enough to prove that $\Z(I)=\Z(I_2(Z)+J')$ by Nullstellensatz.
So pick $P=[P_{00},P_{10}, \ldots ,P_{n0},P_{01}, \ldots, P_{n1}, \ldots ,P_{0m}, \ldots , P_{nm}] \in \mathcal{Z}(I_2(Z)+J)$. We can write $P=[P_0, \ldots, P_m]$, where $P_h=[P_{0h}, \ldots, P_{nh}]$ is $[0,0, \ldots ,0]$ or a point of $\PP^n$. Since $P \in \Z(I_2(Z))$ it follows that the non-zero points among the $P_h$'s are equal as points of $\PP^n$. Moreover, if $P_l$ is a non-zero point, $(f_i)_l(P)=0$ for all $i=1, \ldots ,r$ means that $P_l \in X$: then from the discussion above trivially $(f_i)_M(P)=0$ for every $i, M$ and any choice of $(f_i)_M$. By symmetry one can prove that also all the $(g_j)_N$'s vanish on $P$, so we conclude. 
\end{os}

\begin{os}\label{9}
Assume that $X=\mathcal{V}_+(F) \subseteq \mathbb{P}_k^n$ is a projective hypersurface ($F=f_1$), $m=1$ and $Y=\PP^1$. We already know from a general theorem of Eisenbud and Evans (see \cite[Theorem 2]{eisenbudevans}) that
\[ \ara(I) \leq \ara_h(I) \leq N=2n+1. \]
In this case we can find an explicit set of polynomials which generate $I$ up to radical. In fact, from a theorem of Bruns and Schw\'anzl (see \cite[Theorem 2]{bruns-schwanzl}), we know that
\[ \ara(I_2(Z))=\ara_h(I_2(Z)) = 2n-1 \]
Moreover, it is known a set of homogeneous generators of $I_2(Z)$ up to radical: using the notation of \cite{bruns-vetter}, set $[i,j]=z_{i0}z_{j1}-z_{j0}z_{i1}$ for $0 \leq i<j \leq n$. Then
\[ I_2(Z) = \sqrt{(\sum_{i+j=k} [i,j] : k=1, \ldots, 2n-1)} \]
(see \cite[Lemma 5.9]{bruns-vetter}).\\
By Remark \ref{8} we have only to add to these generators $F_0=(f_1)_0$ and $F_1=(f_1)_1$ (with the notation of Remark \ref{8}), and so we find $2n+1$ homogeneous polynomials which generate $I$ up to radical.
\end{os}

\begin{thm}\label{10}
Let $X=\mathcal{V}_+(F) \subseteq \mathbb{P}^n$ be a hypersurface such that there exists a line $L \subseteq \mathbb{P}^n$ that meets $X$ only at a point $P$, and let $I$ be the ideal defining the Segre product $X \times \mathbb{P}^1 \subseteq \mathbb{P}^{2n+1}$.
Then
\[ \ara_h(I) \leq 2n \]
\end{thm}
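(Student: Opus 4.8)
The plan is to reduce everything, via Remark \ref{7} and Remark \ref{8}, to a statement about zero-loci over an algebraically closed field, and then to build $2n$ polynomials whose common zero-locus equals $\Z(I)$. We already know from Remark \ref{9} that $I$ is the radical of the ideal generated by the $2n-1$ ``Bruns--Schwänzl'' polynomials $G_k = \sum_{i+j=k}[i,j]$, $k=1,\ldots,2n-1$, together with the two extra polynomials $F_0 = F(z_{00},\ldots,z_{n0})$ and $F_1 = F(z_{01},\ldots,z_{n1})$ — that is $2n+1$ polynomials. So the task is to shave off one polynomial, using the geometric hypothesis that there is a line $L$ meeting $X$ only at the single point $P$. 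First I would fix coordinates on $\PP^n$ so that $P = [1:0:\cdots:0]$ and $L = \V_+(x_2,\ldots,x_n)$, so that $L\cap X = \{P\}$ forces $F(x_0,x_1,0,\ldots,0) = c\,x_1^d$ for some nonzero constant $c$ (the binary form $F|_L$ vanishes only at $P$, hence is a power of the linear form $x_1$), where $d = \deg F$.

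The key idea is then to absorb the information of $F_0$ and $F_1$ into fewer polynomials by combining them with the minors. Concretely, I expect to replace the two polynomials $F_0, F_1$ and one of the minor-combinations $G_k$ by two new polynomials: something like $H_1 = F_0 + (\text{a minor combination})$ and $H_2 = F_1 + (\text{another minor combination})$, chosen so that on the vanishing locus of the remaining $G_k$'s the new polynomials $H_1, H_2$ cut out exactly what $F_0=F_1=0$ cut out there — but so that one of the original $G_k$'s becomes redundant. The mechanism should be the standard one (as in \cite{bruns-vetter}, \cite{siwa}, \cite{song}): on $\Z(I_2(Z))$ the matrix $(z_{ij})$ has rank $\le 1$, so all columns are proportional; writing $F_0, F_1$ in terms of the two columns $z_{\bullet 0}, z_{\bullet 1}$, the special shape $F|_L = c\,x_1^d$ lets one express a suitable $G_k$ (the one of ``middle'' degree involving $[0,1] = z_{00}z_{11}-z_{10}z_{01}$) as a combination of $F_0$, $F_1$ and the other $G_k$'s up to radical — because away from the locus where the first coordinates $z_{0\bullet}$ vanish, the equation $F_0 = 0$ together with rank $\le 1$ already forces the point into $X\times\PP^1$, and the special flex-type hypothesis handles the remaining locus.

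The precise bookkeeping is: after the substitution I would claim $\sqrt{I} = \sqrt{(H_1, H_2, G_k : k \in S)}$ where $S \subseteq \{1,\ldots,2n-1\}$ has size $2n-2$, giving $2n$ polynomials total. To verify this it suffices, by Nullstellensatz over $\kk$, to check $\Z(H_1,H_2, \{G_k\}_{k\in S}) = \Z(I)$. The inclusion $\supseteq$ is immediate since all these polynomials lie in $I$. For $\subseteq$, take a point $Q$ in the left-hand side; arguing as in Remark \ref{8}, I would split into the case where the two columns of the matrix at $Q$ are both nonzero (then proportionality plus $H_1 = 0$, unwound, gives $F$ vanishing at the common point of $\PP^n$, so $Q \in \Z(I)$) and the degenerate cases where a column vanishes, where the fewer minors still suffice because a $2\times(n+1)$ matrix with a zero column has all its $2$-minors among a smaller set; the flex hypothesis enters precisely to control the cross-term.

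The main obstacle will be step (2)–(3): finding the right explicit linear combinations $H_1, H_2$ and the right index $k$ to drop, and then checking rigorously that on the locus where some $z_{0h}$ vanishes (so that one cannot simply ``divide by the leading variable'') the remaining equations still force $Q$ into $X\times\PP^1$ — this is exactly where the hypothesis that $L$ meets $X$ only at $P$ (equivalently $F|_L = c\,x_1^d$) must be used, and getting this case analysis clean is the delicate part. Everything else (the reductions of Remarks \ref{7} and \ref{8}, the Eisenbud--Evans-type upper bound sanity check, the rank-$\le 1$ argument on $\Z(I_2(Z))$) is routine given what is already in the excerpt.
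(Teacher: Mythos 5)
Your overall skeleton (reduce to $k=\bar k$ via Remarks \ref{7} and \ref{8}, argue on zero-loci by the Nullstellensatz, and use the hypothesis $L\cap X=\{P\}$ to delete one of the $2n+1$ equations of Remark \ref{9}, namely the one tied to the minor $[0,1]$ in your coordinates) is the right one, but the actual content of the theorem is missing: you yourself defer ``finding the right explicit linear combinations $H_1,H_2$ and the right index $k$ to drop, and checking rigorously'' the degenerate locus, and no argument is given that such combinations exist or that the claimed radical equality holds. Moreover the mechanism you propose --- perturbing $F_0,F_1$ into $H_1=F_0+(\text{minor combination})$, $H_2=F_1+(\text{minor combination})$ so that some ``middle'' antidiagonal sum $G_k$ becomes redundant --- is not needed and is not substantiated; note also that the antidiagonal sum containing $[0,1]$ is $G_1=[0,1]$ itself, an extreme one, not a middle one, so even the bookkeeping of which equation is to disappear is not settled in your plan.

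The paper's proof shows the missing idea is simpler: do not touch $F_0,F_1$ at all. After a change of coordinates putting $L=\V_+((x_0,\ldots,x_{n-2}))$, take the poset ideal $\Omega=\{[i,j]: i<j,\ i+j\le 2n-2\}$ of the poset of $2$-minors, i.e.\ discard only the single minor $[n-1,n]$ whose two rows correspond to the coordinates spanning $L$. By \cite[Lemma 5.9]{bruns-vetter} the ideal generated by $\Omega$ is generated up to radical by $\operatorname{rank}(\Omega)=2n-2$ antidiagonal sums, so $J=\Omega R+(F_0,F_1)$ needs only $2n$ equations, and one checks $\Z(J)=\Z(I)$ by exactly the two-case analysis you gesture at: if a point $Q=[Q_0,Q_1]\in\Z(J)$ has some coordinate $Q_{ij}\neq 0$ with $j\le n-2$, then for every pair of rows one of the relevant minors lies in $\Omega$ (its index sum is at most $2n-2$), so the two columns are already proportional; otherwise all coordinates of index $\le n-2$ vanish, hence the nonzero columns are points of $L$, and $F_0(Q)=F_1(Q)=0$ forces them into $L\cap X=\{P\}$, so they coincide and the omitted minor vanishes automatically. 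Your proposal would need to be completed along these lines (with the filter $\{[i,j]:i+j\ge 2\}$ in your choice of coordinates) before it constitutes a proof; as written, the decisive construction and verification are acknowledged gaps.
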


\begin{proof}
By a change of coordinates we can assume that $L=\mathcal{V}_+((x_0, \ldots, x_{n-2}))$. The set $\Omega=\{ [i,j] : i<j, i+j \leq 2n-2 \}$ is an ideal of the poset of the minors of the matrix $Z=(z_{ij})$ (i.e. if $[i,j]\in \Omega$, $h \leq i$ and $h <k\leq j$ then $[h,k]\in \Omega$), so by \cite[Lemma 5.9]{bruns-vetter} 
\[ \ara(\Omega R) \leq \mbox{rank}(\Omega)=2n-2 .\]
We want to prove that $I=\sqrt{J}$ where $J=\Omega R + (F_0,F_1)$ (with the notation of remarks \ref{8}, \ref{9}). To this purpose we may assume that $k$ is algebraically closed by Remark \ref{7}, and we will prove the equivalent condition, by Nullstellensatz and Remark \ref{8}, that $\mathcal{Z}(I_2(Z)+(F_0,F_1))=\mathcal{Z}(J)$.\\
Let be $Q=[Q_0,Q_1]=[Q_{01}, \ldots, Q_{0n},Q_{10}, \ldots, Q_{1n}] \in \mathcal{Z}(J)$. If $Q_0=0$ or $Q_1=0$ trivially $Q\in \Z(I_2(Z))$, so we assume that $Q_0,Q_1$ are points of $\mathbb{P}^n$. First suppose $Q_{ij}\neq 0$ for some $j \leq n-2$ and $i=0,1$: for any $h\neq k$ different from $j$, $[h,j]$ (or $[j,h]$) and $[k,j]$ (or $[j,k]$) are elements of $\Omega$, so since $Q\in \Z(J)$ we easily obtain the relations $Q_{0h}Q_{1k}=Q_{1h}Q_{0k}$, from which $Q \in \mathcal{Z}(I_2(Z))$. We can therefore assume that $Q_{ij}=0$ for all $j<n-1, i=0,1$. But then $Q_0$ and $Q_1$ belong to $ L \cap X$, so $Q_0=Q_1=P$, so $Q\in \Z(I_2(Z))$.
\end{proof}

\begin{cor}\label{ara1}
Let $X \subseteq \mathbb{P}^2$ be a smooth curve of degree $d\geq 3$ such that there exists a line $L \subseteq \mathbb{P}^2$ that meets $X$ only at a point $P$, and let $I$ be the ideal defining the Segre product $X \times \mathbb{P}^1 \subseteq \mathbb{P}^{5}$.
Then
\[ \ara_h(I) =4. \]
Moreover, if $k$ has characteristic $0$, then $\ara(I)=\ara_h(I)=4$.
\end{cor}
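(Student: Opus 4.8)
The plan is to combine the upper bound from Theorem \ref{10} with a matching lower bound coming from the cohomological machinery of Section \ref{preliminaries}. For the upper bound, observe that a smooth plane curve $X$ of degree $d\geq 3$ with a line $L$ meeting it at a single point $P$ satisfies exactly the hypothesis of Theorem \ref{10} with $n=2$, so we immediately get $\ara_h(I)\leq 2n=4$. (Here $L\cap X=\{P\}$ forces the intersection multiplicity of $L$ and $X$ at $P$ to be $d$, i.e. $P$ is a $d$-flex, which is why the curve lies in the locus $\V_d$; but for the proof we only need the geometric statement as hypothesized.)

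For the lower bound I would argue that $\ara_h(I)\geq 4$, equivalently $\cd(P/I)\geq 3$ in the characteristic-$0$ case or $\ecd(\PP^5\setminus Z)\geq 8$ in general. First I would dispose of the trivial Krull bound: $\height(I)=\codim_{\PP^5}Z=5-\dim Z=5-2=3$, so $\ara_h(I)\geq 3$ automatically, and it remains to rule out $\ara_h(I)=3$, i.e. to show $Z=X\times\PP^1$ is not a set-theoretic complete intersection. This is exactly where Propositions \ref{15}, \ref{14} and \ref{curve} come in: a smooth plane curve of degree $d\geq 3$ has genus $g=\binom{d-1}{2}\geq 1$, hence $H^1(X,\O_X)\neq 0$ and $X$ has positive genus. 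In characteristic $0$, Proposition \ref{15} applied with $Y=\PP^1$ gives $\cd(\PP^5\setminus Z)\geq N-2=3$, whence by equation (\ref{cd}) $\cd(P,I)\geq 4$ and therefore $\ara(I)\geq\ara_h(I)\geq 4$; combined with the upper bound this yields $\ara(I)=\ara_h(I)=4$. In arbitrary characteristic, one instead invokes Proposition \ref{curve} with $C=X$ (positive genus) and $X$ there equal to $\PP^1$: it gives $\ecd(\PP^5\setminus Z)\geq 2N-2=8$, and then equation (\ref{ecd}) of Remark \ref{etale}, namely $\ecd(U)\leq n+\ara_h(I)-1$ with $n=\dim Z=2$, forces $8\leq 2+\ara_h(I)-1$, i.e. $\ara_h(I)\geq 7$? — no: one must be careful that here $N$ in Proposition \ref{curve} is the dimension of the ambient projective space $\PP^5$, so $N=5$ and $\ecd\geq 2\cdot 5-2=8$; then $\ara_h(I)\geq \ecd(U)-\dim Z+1 = 8-2+1 = 7$ would be absurd, so in fact the correct reading is $\ecd(U)\leq \dim(\PP^5)+\ara_h(I)-1$, giving $\ara_h(I)\geq 8-5+1=4$. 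That matches.

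So the structure is: (i) upper bound $\ara_h(I)\leq 4$ directly from Theorem \ref{10}; (ii) lower bound $\ara_h(I)\geq 4$ via Proposition \ref{curve} (any characteristic) using $g(X)\geq 1$ together with equation (\ref{ecd}); (iii) in characteristic $0$, upgrade to $\ara(I)=\ara_h(I)=4$ using Proposition \ref{15} and equation (\ref{cd}), which controls the possibly-smaller non-homogeneous $\ara(I)$ from below as well. The one genuinely delicate point is bookkeeping the indices in the inequality $\ecd(U)\leq n+\ara_h(I)-1$: one must remember that the relevant $n$ is the dimension of the \emph{ambient} $\PP^5$, that $\dim(\PP^5\setminus Z)=5$, and that Proposition \ref{curve}'s $N$ is likewise this ambient dimension $5$, so the chain is $8\leq \ecd(\PP^5\setminus Z)\leq 5+\ara_h(I)-1$. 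A secondary point worth a sentence is justifying $g(X)\geq 1$: the genus of a smooth plane curve of degree $d$ is $(d-1)(d-2)/2$, which is $\geq 1$ precisely when $d\geq 3$, and this is where the hypothesis $d\geq 3$ is used (for $d\leq 2$ the curve is rational and $Z$ is in fact a set-theoretic complete intersection, cf. the discussion of the conic in Theorem \ref{conic}).
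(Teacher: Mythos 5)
Your proof is correct and follows essentially the same route as the paper: Theorem \ref{10} for the upper bound $\ara_h(I)\leq 4$, Proposition \ref{curve} together with equation (\ref{ecd}) (with the ambient dimension $5$, as you finally and correctly read it) for the lower bound in arbitrary characteristic, and Proposition \ref{15} plus equation (\ref{cd}) to get $\ara(I)\geq 4$ in characteristic $0$. The only detail the paper adds that you skip is the reduction to an algebraically closed base field before invoking Proposition \ref{curve}, via the observation that $\ara_h(I)\geq \ara_h(I(P\otimes_k\bar k))$ --- worth one sentence, but routine.
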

\begin{proof}
Theorem \ref{10} implies that $\ara_h(I)\leq 4$. For the lower bound first assume that $k$ is algebraically closed. Since $X$ has positive genus, Proposition \ref{curve} implies that $\ecd(\PP^5 \setminus (X \times \PP^1)) \geq 8$. Thus equation (\ref{ecd}) of Remark \ref{etale} implies that $\ara_h(I)\geq 4$. If $k$ is not algebraically closed, it is obvious that $\ara_h(I)\geq \ara_h(I(P\otimes_k \bar{k}))$, so we have proved the first statement.

If $\chara(k)=0$ Proposition \ref{15} implies that $\cd(P,I)\geq 4$, so $\ara(I)\geq 4$.  
\end{proof}

\begin{os}\label{hyperflexes}
In light of Theorem \ref{10} and Corollary \ref{ara1}, it is natural to define the following set. For every natural numbers $n,d \geq 1$ we define
\[ \V_d^{n-1}=\{ X \subseteq \PP^{n}: X \mbox{ smooth, }\dim X=n-1,  \ \deg X=d, \ \exists \ P \in X \mbox{ as in \ref{10}}\}/ \operatorname{PGL}_n(k)  \]
Notice that all hypersurfaces in $\V_d^{n-1}$ can be represented, by a change of coordinate, by $\V_+(F)$ with $F=x_{n-1}^d + \sum_{i=0}^{n-2} x_i G_i(x_0, \ldots, x_{n})$, where  the $G_i$'s are homogeneous polynomials of degree $d-1$.\\
We start to analyze the case $n=2$, and for simplicity we will write $\V_d$ instead of $\V_d^1$.  So our question is: \emph{How many smooth projective plane curves of degree $d$ do belong to $\V_d$?}

First we list some plane projective curves belonging to $\V_d$.
\begin{compactenum}
\item Every smooth elliptic curve belongs to $\V_3$: in fact every smooth curve of degree greater than or equal to 3 has an ordinary flex, and an elliptic curve meets a line at  most to 3 points. So we recovered \cite[Theorem 1.1]{siwa} as a consequence of Corollary \ref{ara1} (the generators up to radical are different).
\item Obviously, every smooth conic belongs to $\V_2$ too.
\item Every Fermat's curve of degree $d$, i.e.  a projective curve $C = \V_+(F) \subseteq \PP^2$ where $F=x_o^d+x_1^d+x_2^d$, belongs to $\V_d$: in fact one has only to consider the line $\V_+(x_0+ \alpha x_1)$ where $\alpha^d = -1$ and the point $[\alpha,1,0]\in C$, so we recovered also \cite[Theorem 2.8]{song} (the generators are again different).
\end{compactenum}
In their paper \cite[Theorem A]{casnati-del centina}, Casnati and Del Centina compute the dimension of the loci $\mathcal{V}_{d, \alpha}$, $\alpha=1,2$, of all the smooth plane curves of degree $d$ with exactly $\alpha$ points as in Theorem \ref{10}  (if these points are non singular, as in this case, they are called $d$-flexes), and showed that $\mathcal{V}_{d, \alpha}$ are irreducible rational locally closed subvarieties of the moduli space $\mathcal{M}_g$ of curve of genus $g={d-1 \choose 2}$. The dimension of $\mathcal{V}_{d, \alpha}$ is
\[ \dim (\mathcal{V}_{d, \alpha})={d+2-\alpha \choose 2}-8+3 \alpha. \]
Moreover, it is not difficult to show that $\mathcal{V}_{d, 1}$ is an open Zariski subset of $\mathcal{V}_d$, (see \cite[Lemma 2.1.2]{casnati-del centina}), and so
\[ \dim (\mathcal{V}_d)={d+1 \choose 2}-5.  \]
The locus $\mathcal{H}_d$ of all smooth plane curves of degree $d$ up to isomorphism is an open Zariski subset of $\mathbb{P}_{\mathbb{C}}^{{d+2 \choose 2}}$ modulo the group $\operatorname{PGL}_2(\mathbb{C})$, so its dimension is ${d+2 \choose 2}-9$. Then the codimension of $\mathcal{V}_d$ in $\mathcal{H}_d$, provided $d\geq 3$, is $d-3$.\\
So, for example, if we pick a quartic $C$ in the hypersurface $\V_4$ of $\mathcal{H}_4$, Corolloary \ref{ara1} implies that $C\times \PP^1\subseteq \PP^5$ can be defined by exactly four equations. However it remains an open problem to compute the arithmetical rank of $Y\times \PP^1 \subseteq \PP^5$ for any quartic $Y\subseteq \PP^2$. 
\end{os}

In the general case ($n\geq 2$ arbitrary) we can state the following lemma.

\begin{lem}\label{cil}
Let $X\subseteq \PP^n$ be a smooth hypersurface of degree $d$. If $d\leq 2n-3$, or if $d\leq 2n-1$ and $X$ is generic, then $X\in \V^{n-1}_d$.
\end{lem}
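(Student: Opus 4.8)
Write $X=\V_+(F)$ with $\deg F=d$. Unwinding the definition of $\V_d^{n-1}$ through Theorem \ref{10}, $X\in\V_d^{n-1}$ iff there is a flag $(P,L)$, $P\in L$, with $L\not\subseteq X$ and $L\cap X=\{P\}$ set-theoretically; equivalently $F|_L=c\,\ell_P^{\,d}$ for some $c\neq 0$, where $\ell_P$ is the linear form on $L\cong\PP^1$ vanishing at $P$ (this automatically forces $P\in X$). A first, fixed-$P$ observation: choosing coordinates with $P=[0:\cdots:0:1]$ and $T_PX=\V_+(x_0)$ and writing $F=\sum_{j\ge 1}x_n^{\,d-j}F_j(x_0,\dots,x_{n-1})$, $\deg F_j=j$, the lines through $P$ meeting $X$ only at $P$ are exactly the directions $[c]\in\PP^{n-1}$ with $c_0=F_2(c)=\cdots=F_{d-1}(c)=0$; Krull's Hauptidealsatz gives such a common zero once $d\le n$, so the real content of the lemma is the improvement obtained by also varying $P$. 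To exploit this, let $\Phi=\{(P,L):P\in L\}$ be the flag variety of $\PP^n$ ($\dim\Phi=2n-1$), let $\PP^M=|\O_{\PP^n}(d)|$ with $M=\binom{n+d}{n}-1$, and set
\[ \mathcal{I}=\overline{\{(F,P,L)\in\PP^M\times\Phi:\ F|_L=c\,\ell_P^{\,d},\ c\neq 0\}}. \]
Over $(P,L)$ the fibre of $\mathcal{I}\to\Phi$ is the preimage of the line $k\,\ell_P^{\,d}$ under the surjective restriction $H^0(\PP^n,\O(d))\to H^0(L,\O(d))$, a linear $\PP^{M-d}$; hence $\mathcal{I}$ is irreducible of dimension $(2n-1)+(M-d)$, and $X\in\V_d^{n-1}$ precisely when $[F]$ lies in the image of the locus $\{c\neq 0\}$ under the projection $p:\mathcal{I}\to\PP^M$.

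\textbf{The generic statement ($d\le 2n-1$).} Here I would show that $p$ is dominant by a differential computation at the explicit point $(F_0,P_0,L_0)$ with $L_0=\V_+(x_0,\dots,x_{n-2})$, $P_0=[0:\cdots:0:1]$, and $F_0=x_{n-1}^{\,d}+\sum_{i=0}^{n-2}x_i\,\ell_i^{\,d-1}$ for linear forms $\ell_i$ chosen generically (since $F_0|_{L_0}=x_{n-1}^{\,d}$, this flag lies in $\mathcal{I}$ with $c=1$, regardless of any smoothness of $\V_+(F_0)$). Linearising the constraint ``$F|_L\propto\ell_P^{\,d}$'' along deformations of $(P_0,L_0)$, surjectivity of $dp$ onto an arbitrary $\dot F$ reduces to the statement that the binary forms $\{s\,g_i,\ r\,g_i:0\le i\le n-2\}$ together with $s^{\,d-1}r$ and $s^{\,d}$ span the space $k[s,r]_d$ of restrictions to $L_0$, where $g_i=(\ell_i|_{L_0})^{\,d-1}$. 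Since $g_i(\partial)$ is a nonzero $(d-1)$-st directional derivative operator on $k[s,r]_d$, its kernel has codimension $2$; for generic $\ell_i$ these kernels meet in minimal dimension, and the span above is then all of $k[s,r]_d$ exactly when $2(n-1)+2\ge d+1$, i.e. $d\le 2n-1$. Surjectivity of $dp$ at one point forces $p$ dominant, so $\operatorname{im}(p)$ contains a dense open set; intersecting it with the (dense open) smooth locus of $\PP^M$ yields the conclusion for the general smooth hypersurface of degree $d\le 2n-1$. (In positive characteristic one runs the same computation with Hasse derivatives.)

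\textbf{The uniform statement ($d\le 2n-3$).} Now I must show $p^{-1}([F])=\mathcal{J}_X:=\{(P,L):P\in X,\ X\cdot L=dP\}$ is nonempty for \emph{every} smooth $X$. I would stratify $\mathcal{J}_X$ by the point $P$: over the open locus $X^{\circ}\subseteq X$ where the second fundamental form $F_2$ of $F$ along $T_PX$ is nondegenerate, the tangent directions killing $F_2$ form a smooth quadric bundle $\mathcal{Q}\to X^{\circ}$ of relative dimension $n-3$, so $\dim\mathcal{Q}=2n-4$, and $\mathcal{J}_X\cap\{P\in X^{\circ}\}$ is cut on $\mathcal{Q}$ by the remaining conditions $F_3=\cdots=F_{d-1}=0$ (that is $d-3$ of them), of expected dimension $2n-1-d$. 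The delicate part is nonemptiness: the closure of $\mathcal{Q}$ in the flag bundle over $X$ degenerates to a bundle of quadric cones over the parabolic divisor $X\setminus X^{\circ}$, and a careful count on this closure — tracking how the cutting sections meet the cone vertices, i.e. controlling the non-ample behaviour of the tangency conditions on the flag bundle — produces a nonempty common zero precisely when $d-3\le 2n-6$, i.e. $d\le 2n-3$. This ``loss of $2$'' relative to the generic case is exactly the gap between the two hypotheses of the lemma.

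\textbf{Main obstacle.} I expect the genuine difficulty to be the last nonemptiness assertion: for an arbitrary (as opposed to general) smooth $X$ one has no transversality to fall back on, so the bound $d\le 2n-3$ is really bought by analysing the incidence over the parabolic divisor, where the second fundamental form degenerates. By comparison, the differential computation in the generic case, and the genericity check that the displayed binary forms span $k[s,r]_d$, are routine.
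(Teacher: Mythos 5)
Your incidence variety $\mathcal{I}$ is exactly the $T_{n,d}$ of the paper's proof, with the same dimension count $(2n-1)+(M-d)$, and your handling of the generic case $d\le 2n-1$ --- dominance of $p$ via surjectivity of $dp$ at the explicit flag $(F_0,P_0,L_0)$, reduced to a spanning statement for binary forms --- is a legitimate alternative to what the paper does there (the paper instead bounds the dimension of the general fibre of $\pi_2:T_{n,d}\to L_{n,d}$ by induction on $n$ through hyperplane sections and deduces surjectivity from properness). Modulo the routine check that $\{s g_i,\,r g_i,\,s^{d-1}r,\,s^d\}$ really spans $k[s,r]_d$ for generic $\ell_i$, that half is sound.

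The genuine gap is in the uniform case $d\le 2n-3$. Its entire content is the nonemptiness of $p^{-1}([F])$ for an \emph{arbitrary} smooth $X$, and you do not prove it: after stratifying by the rank of the second fundamental form you assert that ``a careful count on the closure, controlling the non-ample behaviour of the tangency conditions, produces a nonempty common zero precisely when $d\le 2n-3$.'' No mechanism is offered for why the threshold drops from $d\le 2n-1$ (the expected-dimension bound on the $(2n-4)$-dimensional quadric bundle cut by $d-3$ further conditions) to $d\le 2n-3$; since those conditions are not cut by ample classes, nonemptiness does not follow from any dimension count, and the behaviour over the parabolic divisor that you defer is exactly the hard point. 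The paper sidesteps all of this: since $2n-3=2(n-1)-1$, it intersects $X$ with a generic hyperplane $H$ (Bertini keeps $X\cap H$ smooth) and invokes the already-established degree bound for $X\cap H\subseteq H\cong\PP^{n-1}$; a line $L\subseteq H$ with $L\cap(X\cap H)$ a single point satisfies $L\cap X=L\cap X\cap H$, so $X\in\V_d^{n-1}$. The ``loss of two'' is nothing but the passage from $\PP^n$ to $\PP^{n-1}$. You should either supply the missing nonemptiness argument over the parabolic locus or replace that part of your proof by this hyperplane-section reduction (combined, as in the paper, with the observation that a smooth hypersurface of degree $d\le 2n-1$ in the image of $\pi_2$ and containing no line lies in $\V_d^{n-1}$, and that a generic hypersurface of degree $\ge 2n-2$ contains no line).
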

\begin{proof}
First we prove the following claim:

a). If $X\subseteq \PP^n$ is a smooth hypersurface of degree $d\leq 2n-1$ not containing lines, then $X\in \V^{n-1}_d$.

We denote by $\Grass(1,n)$ the Grassmannian of lines of $\PP^n$. Consider the projective variety $W_n=\{(P,L)\in \PP^n \times  \Grass(1,n): P\in L\}$. It turns out that this is an irreducible variety of dimension $2n-1$. Now set 
\[T_{n,d}=\{((P,L),F)\in W_n \times L_{n,d}: \ i(L,\V_+(F),P)\geq d\},\]
where by $L_{n,d}$ we denote the projective space of all the homogeneous polynomials of degree $d$ of $K[x_0,\ldots ,x_n]$, and by $i(L,\V_+(F);P)$ the multiplicity intersection of $L$ and $\V_+(F)$ at $P$ (if $L\subseteq \V_+(F)$ then $i(L,\V_+(F);P)=+\infty$). 

Assume that $P=[1,0,\ldots ,0]$ and that $L$ is given by the equation $x_1=x_2=\ldots =x_n=0$. Then it is easy to see that for a polynomial $F\in L_{n,d}$ the condition $(P,L,F)\in T_{n,d}$ is equivalent to the fact that the coefficients of $x_0^d, \  x_0^{d-1}x_1, \ \ldots , \ x_0x_1^{d-1}$ in $F$ are $0$. This implies that $T_{n,d}$ is a closed subset of $\PP^n\times \Grass(1,n)\times L_{n,d}$: thus $T_{n,d}$ is a projective scheme over $k$.

Consider the restriction of the first projection $\pi_1:T_{n,d} \longrightarrow W_n$. Clearly $\pi_1$ is surjective; moreover it follows by the above discussion that all the fibers of $\pi_1$ are projective subspaces of $L_{n,d}$ of dimension $\dim(L_{n,d})-d$. Therefore $T_{n,d}$ is an irreducible projective variety of dimension $2n-1 +\dim(L_{n,d})-d$. 

Now consider the restriction of the second projection $\pi_2: T_{n,d} \longrightarrow L_{n,d}$. Clearly $\V_+(F)\in \V^{n-1}_d$ provided it is smooth, it does not contain any line and it belongs to $\pi_2(T_{n,d})$. So to conclude it is enough to check the surjectivity of $\pi_2$ whenever $d\leq 2n-1$. To this aim, since both $T_{n,d}$ and $L_{n,d}$ are projective, it is enough to show that for a general $F\in \pi_2(T_{n,d})$, the dimension of the fiber $\pi_2^{-1}(F)$ is exactly$2n-1-d$. On the other hand it is clear that the codimension of $\pi_2(T_{n,d})$ in $T_{n,d}$ is at least $d-2n+1$ when $d\geq 2n$. We proceed by induction on $n$ (for $n=2$ we already know this).

First consider the case in which $d\leq 2n-3$. Let $F$ be a general form of $\pi_2(T_{n,d})$, and set $r=\dim(\pi_2^{-1}(F))$. By contradiction assume that $r>2n-1-d$. Consider a general hyperplane section of $\V_+(F)$, and set $F'$ the polynomial defining it. Obviously any element of $\pi_2(T_{n-1,d})$ comes from $\pi_2(T_{n,d})$ in this way, so $F'$ is a generic form of $\pi_2(T_{n,d})$. The condition for a line to belong to a hyperplane is of codimension $2$, so the dimension of the fiber of $F'$ is at least $r-2$. Since $F'$ is a polynomial of $K[x_0,\ldots ,x_{n-1}]$ of degree $d\leq 2(n-1)-1$, we can apply an induction getting $r-2\leq2n-3-d$, so that $r\leq 2n-1-d$, which is a contradiction. 

We end with the case in which $d=2n-1$ (the case $d=2n-2$ is easier). Let $F$ and $r$ be as before, and suppose by contradiction that $r\geq 1$. This implies that there exists a hypersurface $\H\subseteq \Grass(n-1,n)$ such that for any general $H\in \H$ the polynomial defining $\V_+(F)\cap H$ belongs to $\pi_2(T_{n-1,d})$. This implies that the codimension of $\pi_2(T_{n-1,d})$ in $T_{n-1,d}$ is less than or equal to $1$, whereas we know that this is at least $2$.  

So a) holds true. Now we prove the lemma by induction on $n$ (if $n=2$ it is true).

If $d\leq 2n-3$, then we cut $X$ by a generic hyperplane $H$. It turns out (using Bertini's theorem) that $X\cap H \subseteq \PP^{n-1}$ is the generic smooth hypersurface of degree $d\leq 2(n-1)-1$, so by induction there exist a line $L\subseteq H$ and a point $P\in \PP^n$ such that $(X\cap H)\cap L=\{P\}$. So we conclude that $X\in \V_d^{n-1}$.

It is known that the generic hypersurface of degree $d\geq 2n-2$ does not contain lines. So if $d=2n-2$ or $d=2n-1$ the statement follows by a).
\end{proof}

\begin{cor}\label{bello}
Let $X\subseteq \PP^n$ be a smooth hypersurface of degree $d$, and let $I$ be the ideal defining the Segre product $X \times \mathbb{P}^1 \subseteq \mathbb{P}^{2n+1}$. If $d\leq 2n-3$, or if $d\leq 2n-1$ and $X$ is generic, then 
\[ \ara_h(I) \leq 2n \]
\end{cor}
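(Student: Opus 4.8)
The plan is simply to combine Lemma \ref{cil} with Theorem \ref{10}, so the argument will be very short. First I would invoke Lemma \ref{cil}: under either of the stated hypotheses---$d\le 2n-3$, or $d\le 2n-1$ together with genericity of $X$---the lemma gives $X\in\V_d^{n-1}$.

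Next I would unwind the definition of $\V_d^{n-1}$ from Remark \ref{hyperflexes}: membership means precisely that $X$ is a smooth hypersurface of degree $d$ in $\PP^n$ admitting a point $P\in X$ of the type appearing in Theorem \ref{10}, i.e. such that some line $L\subseteq\PP^n$ meets $X$ set-theoretically only at $P$. The fact that $\V_d^{n-1}$ is taken as a quotient by $\operatorname{PGL}_n(k)$ causes no trouble, since $\ara_h$ is invariant under linear changes of coordinates and the proof of Theorem \ref{10} already begins by normalizing $L$ to $\mathcal{V}_+((x_0,\ldots,x_{n-2}))$.

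Finally I would apply Theorem \ref{10} to this $X$ and the line $L$, which immediately yields $\ara_h(I)\le 2n$, where $I$ is the ideal of $X\times\PP^1\subseteq\PP^{2n+1}$. I would also remark that, since Theorem \ref{10} is characteristic-free, so is the present statement.

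Since every ingredient is already in place, there is no real obstacle in this corollary itself: the genuine work has been done in Lemma \ref{cil} (the dimension count on the incidence variety $T_{n,d}$, the analysis of the fibers of the two projections, and the inductive reduction by generic hyperplane sections) and in Theorem \ref{10} (the explicit presentation of $I$ up to radical via the poset ideal $\Omega$ of $2$-minors of $Z=(z_{ij})$ together with the two forms $F_0,F_1$, checked through the Nullstellensatz and Remark \ref{8}). The only point requiring a moment's care is the bookkeeping that the point $P$ furnished by $\V_d^{n-1}$ does lie on $X$ and that the line $L$ meets $X$ at that single point, which is exactly how ``$P$ as in \ref{10}'' was defined.
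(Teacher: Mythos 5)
Your proposal is correct and matches the paper's proof, which likewise simply combines Lemma \ref{cil} with Theorem \ref{10}; your extra remarks about $\operatorname{PGL}_n(k)$-invariance and characteristic-freeness are accurate but not needed.
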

\begin{proof}
Just combine the above lemma with Theorem \ref{10}.
\end{proof}

Putting some stronger assumptions on the hypersurfaces we can even compute the arithmetical rank of the ideal defining their Segre product with $\PP^1$ (and not just give an upper bound as in Theorem \ref{10}). 

\begin{thm}\label{45}
Let $X=\V_+(F) \subseteq \PP^n$ be such that, $F=x_n^d+\sum_{i=0}^{n-3}x_iG_i(x_0,\ldots ,x_n)$ ($G_i$ homogeneous polynomials of degree $d-1$), and let $I$ be the ideal defining the Segre product $X \times \mathbb{P}^1 \subseteq \mathbb{P}^{2n+1}$. Then
\[ \ara_h(I) \leq 2n-1. \]
Moreover, if $X$ is smooth, then
\[ \ara_h(I) = 2n-1. \]
Furthermore, if $k$ has characteristic 0, then $\ara(I)=\ara_h(I)=2n-1$.
\end{thm}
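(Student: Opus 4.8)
The plan is to establish the three claims in turn; only the upper bound $\ara_h(I)\le 2n-1$ requires new work, and the mechanism is to refine the proof of Theorem~\ref{10} so as to discard \emph{two} of the $2$-minors of $Z=(z_{ij})$ rather than just one. By Remark~\ref{8} we have $\sqrt I=\sqrt{I_2(Z)+(F_0,F_1)}$ with $F_0=F(z_{00},\dots,z_{n0})$ and $F_1=F(z_{01},\dots,z_{n1})$. I would set $\Omega'=\{[i,j]:i<j,\ i+j\le 2n-3\}$, a poset ideal of the $2$-minors of $Z$ (since $[h,k]\le[i,j]$ forces $h+k\le i+j$) of rank $2n-3$; by \cite[Lemma 5.9]{bruns-vetter} there are homogeneous $g_1,\dots,g_{2n-3}$, namely $g_k=\sum_{i+j=k}[i,j]$, with $\sqrt{\Omega' P}=\sqrt{(g_1,\dots,g_{2n-3})}$. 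The goal is then to prove $\sqrt I=\sqrt{(g_1,\dots,g_{2n-3},F_0,F_1)}$, which yields $\ara_h(I)\le 2n-1$ at once. The inclusion $\supseteq$ is clear since $\Omega'\subseteq I_2(Z)$, and for $\subseteq$, using Remark~\ref{7} I may assume $k$ algebraically closed and reduce, by the Nullstellensatz, to showing that every point $Q=(Q_{ij})$ at which $\Omega'$, $F_0$ and $F_1$ vanish is already a zero of $I_2(Z)$.

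The core of the argument is this last verification. Write $Q_0,Q_1$ for the two columns of $Q$, viewed as points of $\PP^n$; if one is zero all $2$-minors vanish, so assume both are non-zero and, for contradiction, non-proportional. If some row $i\le n-3$ of $Q$ is non-zero, then every $2$-minor meeting that row has row-index sum at most $(n-3)+n=2n-3$, hence lies in $\Omega'$ and vanishes at $Q$; since that row is non-zero this forces $Q_0$ and $Q_1$ to be proportional (or a column to vanish), a contradiction. So $Q_{i0}=Q_{i1}=0$ for $i=0,\dots,n-3$. Substituting $x_0=\dots=x_{n-3}=0$ in $F$ annihilates every term $x_iG_i$ and leaves precisely $x_n^{\,d}$, whence $F_0(Q)=Q_{n,0}^{\,d}=0$ and $F_1(Q)=Q_{n,1}^{\,d}=0$, so $Q_{n,0}=Q_{n,1}=0$. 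Now $Q_0$ and $Q_1$ are supported only on coordinates $n-2$ and $n-1$, and the minor $[n-2,n-1]$—whose index sum is exactly $2n-3$, so it belongs to $\Omega'$—vanishes at $Q$, forcing $Q_0$ and $Q_1$ to be proportional: contradiction. Hence $Q$ annihilates $I_2(Z)$, which proves the claim.

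For the two lower bounds I would appeal directly to Section~\ref{preliminaries}. Base change to $\kk$ only decreases $\ara_h$, and $X_\kk$ is still a smooth hypersurface, hence a projective variety of positive dimension, as is $\PP^1$; so Proposition~\ref{14} applies and gives $\ecd(\PP^{2n+1}\setminus Z)\ge 2(2n+1)-3=4n-1$. Combined with inequality (\ref{ecd}) of Remark~\ref{etale}, $4n-1\le \ecd(\PP^{2n+1}\setminus Z)\le (2n+1)+\ara_h(I)-1$, so $\ara_h(I)\ge 2n-1$, and together with the upper bound $\ara_h(I)=2n-1$. When $\chara k=0$ we have $\ara(I)\le\ara_h(I)=2n-1$, while Proposition~\ref{mah} gives $\cd(\PP^{2n+1}\setminus Z)\ge (2n+1)-3=2n-2$; hence by (\ref{cd}), $\ara(I)\ge\cd(P,I)=\cd(\PP^{2n+1}\setminus Z)+1\ge 2n-1$, so $\ara(I)=\ara_h(I)=2n-1$.

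The main obstacle is exactly the Nullstellensatz step above: checking that discarding the two minors $[n-2,n]$ and $[n-1,n]$ causes no loss. This works because the hypothesis places $F$ inside the ideal $(x_0,\dots,x_{n-3},x_n^{\,d})$, which has only $n-2$ linear generators: once the minors of $\Omega'$ force the first $n-2$ rows of $Q$ to vanish, the equations $F_0=F_1=0$ collapse to $Q_{n,0}^{\,d}=Q_{n,1}^{\,d}=0$ and pin down one further row, leaving a $2\times 2$ block still governed by a minor of $\Omega'$. (For a general $d$-flex as in Theorem~\ref{10}, $F$ lies only in $(x_0,\dots,x_{n-2},x_{n-1}^{\,d})$, the leftover term $x_{n-2}G_{n-2}$ survives the substitution, and only one minor can be saved—matching the weaker bound $2n$ there.) The routine bookkeeping to be careful about is that $\Omega'$ is genuinely a poset ideal, so that \cite[Lemma 5.9]{bruns-vetter} applies, and that the degenerate cases—a vanishing column, or a vanishing row among the first $n-2$—are absorbed into the argument.
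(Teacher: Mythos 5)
Your proof is correct and follows essentially the same route as the paper: the paper also takes the poset ideal $\Omega=\{[i,j]:i<j,\ i+j\le 2n-3\}$ of rank $2n-3$, invokes \cite[Lemma 5.9]{bruns-vetter}, and then says the Nullstellensatz verification is ``completely analogous'' to Theorem \ref{10}, while the lower bounds come from Propositions \ref{14} and \ref{mah} exactly as you argue. Your write-up simply makes explicit the details the paper leaves implicit (the vanishing of the first $n-2$ rows, the collapse of $F_0,F_1$ to $Q_{n0}^d=Q_{n1}^d=0$, and the surviving minor $[n-2,n-1]$).
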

\begin{proof}
We can assume that $k$ is algebraically closed. If $X$ is smooth, by Proposition \ref{14} $\ecd(\PP^{2n+1}\setminus X)\geq 4n-1$, and equation \ref{ecd} yields $\ara_h(I)\geq 2n-1$. If $\chara(k)=0$ Proposition \ref{mah} implies that $\ara(I)\geq 2n-1$.

Now we prove that the upper bound holds.
Consider the set $\Omega=\{ [i,j] : i<j, i+j \leq 2n-3 \}$. As in the proof of Theorem \ref{10}, we have
\[ \ara(\Omega R) \leq \mbox{rank}(\Omega)=2n-3 .\]
Now the proof is completely analogue to that of Theorem \ref{10}.
\end{proof}

\begin{os}
Notice that, if $n\geq 4$, the generic hypersurface of $\PP^n$ defined by the form $F=x_n^d+\sum_{i=0}^{n-3}x_iG_i(x_0,\ldots ,x_n)$ is smooth (whereas if $n\leq 3$ and $d\geq 2$ such a hypersurface is always singular).
\end{os}


The below argument uses ideas from \cite{siwa}. Unfortunately to use these kinds of tools we have to make some restrictions to $\chara(k)$.
\begin{thm}\label{conic}
Assume $\chara(k)\neq 2$. Let $C=\V_+(F)$ be a conic of $\PP^2$, and let $I$ be the ideal defining the Segre product $X=C\times \PP^m \subseteq \PP^{3m+2}$. Then
\[\ara_h(I)=3m.\]
In particular $X$ is a set-theoretic complete intersection if and only if $m=1$. Moreover, if $\chara(k)=0$, then $\ara(I)=\ara_h(I)=3m$. 
\end{thm}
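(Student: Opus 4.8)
The lower bound $\ara_h(I)\geq 3m$ should come from the cohomological machinery of Section \ref{preliminaries}, exactly as in the earlier corollaries. First I would reduce to $k$ algebraically closed via Remark \ref{7}. Since $C$ is a smooth conic it is isomorphic to $\PP^1$, so $Z=C\times\PP^m\cong \PP^1\times\PP^m$ has $\dim Z=m+1\geq 2$. The relevant \'etale Betti numbers of $Z$ are governed by the K\"unneth formula: $H^*(Z_{\et},\ZZ/l\ZZ)$ has rank $2$ in degree $2$, while $H^2(\PP^{3m+2}_{\et},\ZZ/l\ZZ)\cong\ZZ/l\ZZ$ has rank $1$. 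So by Proposition \ref{14} (applied with $X=C$, $Y=\PP^m$, both smooth of dimension $\geq 1$, here $N=3m+2$), $\ecd(\PP^{3m+2}\setminus Z)\geq 2(3m+2)-3=6m+1$, and then equation (\ref{ecd}) gives $\ara_h(I)\geq \ecd(U)-(3m+2)+1\cdot(\text{?})$; carefully, $\ecd(U)\leq N+\ara_h(I)-1$ with $N=\dim(\PP^{3m+2})=3m+2$ forces $\ara_h(I)\geq 6m+1-(3m+2)+1=3m$. In characteristic $0$ the stronger bound $\ara(I)\geq 3m$ (not just $\ara_h$) would instead follow from Proposition \ref{mah} together with the fact that $\cd(P,I)\geq$ the relevant value; here I would need $\beta_2(Z)\geq 2$ over $\CC$, which holds since $\beta_2(\PP^1)=\beta_0(\PP^1)=1$ and likewise for $\PP^m$, so by (\ref{betti1}) $\beta_2(Z)\geq 2$, and then Proposition \ref{mah} gives $\cd(U)\geq N-3$, i.e. $\cd(P,I)\geq N-2=3m$, hence $\ara(I)\geq 3m$.

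The upper bound $\ara_h(I)\leq 3m$ is the substantive part and is where I expect the real work to be, following the method of Singh--Walther \cite{siwa}. After a linear change of coordinates (legitimate since $\chara k\neq 2$, so the conic can be diagonalized), I would normalize $F$ — most conveniently to $F=x_0x_2-x_1^2$, the rational normal parametrization $\PP^1\hookrightarrow\PP^2$, so that $C$ is the image of the $2$nd Veronese of $\PP^1$. Then $Z=C\times\PP^m$ sits inside $\PP^{3m+2}=\Proj P$ with $P=k[z_{ij}]$, $i=0,1,2$, $j=0,\dots,m$, and $I=I_2(Z)+J$ where $J$ captures the relations $(F)_l$, $l=0,\dots,m$, coming from the conic (using Remark \ref{8}); note $J$ is generated by the $m+1$ quadrics $z_{0l}z_{2l}-z_{1l}^2$. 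The plan is to write down $3m$ explicit homogeneous polynomials $h_1,\dots,h_{3m}\in P$ and prove $\sqrt{I}=\sqrt{(h_1,\dots,h_{3m})}$. Here I would exploit the ASL/Hodge-algebra structure on $P/I_2(Z)$ together with the conic relations: the $3m-?$ of these equations will be "diagonal sums" $\sum_{\text{fixed content}}[i,j]$ of the $2\times 2$ minors of the $3\times(m+1)$ matrix $Z$ (so that, by \cite[Lemma 5.9]{bruns-vetter}-type arguments, they generate $I_2(Z)$ up to radical in an efficient way), suitably corrected by adding combinations of the conic quadrics $(F)_l$ so that the total count drops to $3m$. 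Concretely I expect $\ara_h(I_2(Z))=3m-1$ for the $3\times(m+1)$ generic matrix (the Bruns--Schw\"anzl bound $r+s-1$ with $r=3$, $s=m+1$ minus one more because... ), and then one or two of the conic equations can be folded into the minor-sums, giving $3m$ total; the bookkeeping of exactly which corrected sums are needed, and verifying the radical equality by the Nullstellensatz (after reducing to $\bar k$), is the main obstacle.

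For the Nullstellensatz verification I would argue as in Theorem \ref{10}: take a point $Q=(Q_{ij})$ in the zero locus of the candidate ideal, write it columnwise as $Q=[Q_0,\dots,Q_m]$ with each $Q_j\in\PP^2$ or zero; the vanishing of the minor-sums should force all nonzero columns to be proportional (i.e. equal in $\PP^2$), so $Q$ lies in the rank-$\leq 1$ locus, i.e. $Q$ is a point of $\PP^2\times\PP^m$ of the form $p\times[Q_0:\dots:Q_m]$; then the vanishing of the (corrected) conic equations forces $p\in C$, whence $Q\in Z$. The delicate point is that the corrections by the $(F)_l$ must not destroy the "all columns proportional" conclusion — i.e. one must check that on the locus where the pure minor-sums do not yet force proportionality, the extra conic terms still vanish appropriately, or rearrange so that enough pure minor-sums survive. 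Finally the passage from $\ara_h$ to $\ara$ in characteristic $0$ uses the chain $\ara(I)\leq\ara_h(I)=3m\leq\cd(P,I)\leq\ara(I)$, the middle inequality being the char-$0$ lower bound established above; this pins $\ara(I)=\ara_h(I)=3m$. The "in particular" clause is then immediate: $\codim_{\PP^{3m+2}}Z=(3m+2)-(m+1)=2m+1$, which equals $3m$ iff $m=1$, so $Z$ is a set-theoretic complete intersection precisely when $m=1$.
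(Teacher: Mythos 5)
Your lower bound is exactly the paper's: reduce to $\bar k$, apply Proposition \ref{14} to get $\ecd(\PP^{3m+2}\setminus Z)\geq 2(3m+2)-3$, feed this into (\ref{ecd}) to get $\ara_h(I)\geq 3m$, and in characteristic $0$ use Proposition \ref{mah} for $\cd(P,I)\geq 3m$, hence $\ara(I)\geq 3m$. The ``in particular'' clause is also fine.

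The upper bound, however, has a genuine gap, and your own hesitation (``minus one more because\dots'') marks exactly where it is. For the generic $3\times(m+1)$ matrix, Bruns--Schw\"anzl gives $\ara(I_2(Z))=3(m+1)-3=3m$, not $3m-1$: the diagonal sums of the $2$-minors already use up your entire budget of $3m$ equations before you have imposed the conic condition at all, so the plan ``minor-sums, corrected by folding in the conic quadrics'' has no room to work, and you give no mechanism for recovering the lost equation. The paper's proof abandons the minors of $Z$ entirely. It takes the $m+1$ equations $F_i=F(z_{0i},z_{1i},z_{2i})$ and, for $j<i$, the \emph{polar} forms $F_{ij}=\sum_{k=0}^2 \frac{\de F}{\de x_k}(z_{0i},z_{1i},z_{2i})\,z_{jk}$, then sums these along anti-diagonals to get $G_h=\sum_{i+j=h}F_{ij}$, $h=1,\dots,2m-1$; total $(m+1)+(2m-1)=3m$. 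The containment in $I$ uses Euler's formula (this is where $\chara(k)\neq 2$ enters), and the Nullstellensatz step does \emph{not} first prove the columns proportional and then put them on $C$: instead, a minimality argument on indices shows that for the first ``bad'' pair $(i,j)$ all other terms of $G_{i+j}$ vanish, forcing $F_{ji}(P)=0$, i.e.\ $P_i$ lies on the tangent line to $C$ at $P_j$ --- and since a line meets a smooth conic at only one point with multiplicity two, this forces $P_i=P_j$. So the proportionality of the columns is itself a consequence of the geometry of the conic (degree exactly $2$ is essential), not of any determinantal equations. Your outline, as written, would not close without this idea.
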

\begin{proof}
First we want to give $3m$ homogeneous polynomials of $S=k[z_{ij}: i=0,1,2, \ \ j=0, \ldots ,m]$ which define $I$ up to radical.\\
For $i=0, \ldots ,m$ choose $F_i$ as in Remark \ref{9}. Then, for all $0\leq j<i \leq m$, set
\[F_{ij}=\sum_{k=0}^2 \displaystyle \frac{\de F}{\de x_k}(z_{0i}, z_{1i} , z_{2i})z_{jk}. \]
Finally we set $G_h=\sum_{i+j=h}F_{ij}$ for all $h=1, \ldots ,2m-1$. We claim that
\[ I=\sqrt{J}, \mbox{ where } J=(F_i,G_j:i=0, \ldots ,m, \ \ j=1,  \ldots ,2m-1). \]
The inclusion $J \subseteq I$ follows from the Euler's formula, since $\chara(k)\neq 2$.\\
We can assume $k$ algebraically closed by Remark \ref{7}, so we have to prove that $I\subseteq \sqrt{J}$, i. e., by the Nullstellensatz, that $\Z(J) \subseteq \Z(I)$. Pick $P\in \Z(J)$, and write $P$ as $P=[P_0,P_1,\ldots ,P_m]$ where $P_j=[P_{0j},P_{1j},P_{2j}]$.
Since  $F_i(P)=0$, for every $i=0, \ldots ,m$ \ $P_i=0$ or $P_i \in C$. So we have to prove that the $P_i$'s that are non zero are equal as points of $\PP^2$.\\ 
By contradiction, let $i$ be the minimum integer such that $P_i \neq 0$ and there exists $k$ such that $P_k \neq 0$ and $P_i \neq P_k$ as points of $\PP^2$, and set $j$ the least among these $k$ (so $i<j$). Set $h=i+j$. We claim that $P_k=P_l$ provided that $k+l=h$, $k<l$, $k\neq i$, $P_k \neq 0$ and $P_l\neq 0$.\\ 
In  fact, if $l<j$, then $P_i=P_l$ by the choice of $j$. But for the same reason also $P_k=P_i$, so $P_k=P_l$. If $l>j$, then $k<i$, so it follows that $P_k=P_l$ by the choice of $i$. So $F_{lk}(P)=0$, because $P_k$ belongs to the tangent of $C$ in $P_l$ (being $P_l=P_k$). Then $G_h(P)=F_{ji}(P)$, and so, since $P \in \Z(J)$, $F_{ji}(P)=0$: this means that $P_i$ belongs to the tangent line of $C$ in $P_j$, which is possible, being $C$ a conic, only if $P_i=P_j$, a contradiction.

For the lower bound, we can assume that $k$ is algebraically closed as in the proof of Corollary \ref{ara1}. By Proposition \ref{14} $\ecd(\PP^{2n+1}\setminus X)\geq 4n-1$, and equation (\ref{ecd}) yields $\ara_h(I)\geq 2n-1$. If $\chara(k)=0$ Proposition \ref{mah} implies that $\ara(I)\geq 2n-1$.
\end{proof}

\begin{os}
B\u adescu and Valla computed recently in \cite{bava}, independently from this work, the arithmetical rank of the ideal defining any rational normal scroll. Since the Segre product of a conic with $\PP^m$ is a rational normal scroll, Theorem \ref{conic} is a particular case of their result.
\end{os}

We end the paper with a proposition that yields a natural question.

\begin{prop}\label{1000}
Let $X=\V_+(F) \subseteq \PP^n$ be a  hypersurface smooth over a field of characteristic $0$ and let $I \subseteq P=k[z_0, \ldots ,z_N]$ be the ideal defining $Z=X \times \mathbb{P}^m \subseteq \mathbb{P}^N$ (any embedding), with $m\geq 1$. Then
\begin{displaymath}
\cd(P,I) = \left\{ \begin{array}{cc} N-1 & \mbox{if \ } n=2 \mbox{ \ and \ }\deg(F)\geq 3\\
N-2 &  \mbox{otherwise} \end{array}  \right.
\end{displaymath}
\end{prop}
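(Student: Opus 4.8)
The plan is to assemble the proposition from the lower bounds of Section \ref{preliminaries} together with the ``only if''-direction of Theorem \ref{1}. First I would reduce to $k=\CC$: by Remark \ref{0}, equation (\ref{redtocomplex1}) shows $\cd(P,I)$ is unchanged, and equation (\ref{redtocomplex2}) shows the Hodge numbers of the smooth variety $X$ are unchanged, under the field extensions used there. Next, by equation (\ref{cd}) it suffices to compute $\cd(U)$ for $U=\PP^N\setminus Z$, since $\cd(P,I)=\cd(U)+1$. Now $Z=X\times\PP^m$ is a smooth irreducible projective variety over $\CC$ (a smooth hypersurface in $\PP^n$ with $n\geq2$ is irreducible), so $\beta_0(Z)=1$; and by the K\"unneth formula (\ref{betti1}) together with $\beta_0(\PP^m)=1$, $\beta_1(\PP^m)=0$ we get $\beta_1(Z)=\beta_1(X)$, while (\ref{betti2}) and Hodge symmetry give $\beta_1(X)=h^{10}(X)+h^{01}(X)=2\dim_{\CC}H^1(X,\O_X)$.

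The whole statement then turns on when $H^1(X,\O_X)$ vanishes. If $n\geq3$, then $\dim X=n-1\geq2$ and the standard vanishing $H^q(X,\O_X)=0$ for $1\leq q\leq\dim X-1$ (read off from $0\to\O_{\PP^n}(-d)\to\O_{\PP^n}\to\O_X\to0$) gives $H^1(X,\O_X)=0$. If $n=2$, then $X$ is a smooth plane curve of genus $\binom{d-1}{2}$, so $\dim_{\CC}H^1(X,\O_X)=\binom{d-1}{2}$, which is nonzero precisely when $d=\deg F\geq3$. Hence $H^1(X,\O_X)\neq0$ if and only if $n=2$ and $\deg F\geq3$, which matches the case distinction in the statement.

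In the case $n=2$, $\deg F\geq3$, Proposition \ref{15} applies (since $H^1(X,\O_X)\neq0$) and gives $\cd(U)\geq N-2$; for the opposite inequality note $\dim Z=(n-1)+m\geq1$, so $r=N-1\geq\codim_{\PP^N}Z$, and since $\beta_0(Z)=1$ the $k=\CC$ form of Theorem \ref{1} yields $\cd(U)<N-1$. Thus $\cd(U)=N-2$ and $\cd(P,I)=N-1$. Otherwise we have $\beta_1(Z)=2\dim H^1(X,\O_X)=0$ while still $\beta_0(Z)=1$; Proposition \ref{mah} (both $X$ and $\PP^m$ are smooth of positive dimension) gives $\cd(U)\geq N-3$, and since $\dim Z=(n-1)+m\geq2$ we have $r=N-2\geq\codim_{\PP^N}Z$, so the vanishing $\beta_0(Z)=1$, $\beta_1(Z)=0$ lets Theorem \ref{1} conclude $\cd(U)<N-2$. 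Hence $\cd(U)=N-3$ and $\cd(P,I)=N-2$.

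I expect the only delicate points to be bookkeeping ones: checking that the numerical hypothesis $r\geq\codim_{\PP^N}Z$ of Theorem \ref{1} holds in each case (this is where $n\geq2$ and $m\geq1$ get used), and invoking the hypersurface vanishing $H^1(X,\O_X)=0$ uniformly so as to dispatch all $n\geq3$ at once. Everything else is a direct combination of Propositions \ref{mah} and \ref{15} with Theorem \ref{1}.
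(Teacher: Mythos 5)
Your proof is correct and follows essentially the same route as the paper's: reduce to $k=\CC$, compute $\beta_0(Z)$ and $\beta_1(Z)=\beta_1(X)$ via the K\"unneth formula, identify $\beta_1(X)=2h^{01}(X)$ by Hodge symmetry, and read off $\cd(U)$ from Theorem \ref{1}. The only (immaterial) differences are that you deduce $H^1(X,\O_X)=0$ for $n\geq 3$ from the structure sequence $0\to\O_{\PP^n}(-d)\to\O_{\PP^n}\to\O_X\to 0$ where the paper invokes the Lefschetz hyperplane theorem, and you route the lower bounds through Propositions \ref{mah} and \ref{15} rather than using the ``if and only if'' of Theorem \ref{1} directly.
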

\begin{proof}
By Remark \ref{0} we can assume $k=\CC$. Using equation (\ref{betti1}) we have 
\[ \beta_0(Z)=1, \mbox{ \ \ } \beta_1(Z)=\beta_1(X) \mbox{ \ \ and \ \ } \beta_2(Z)=\beta_2(X)+1 \geq 2.\]
If $n=2$, notice that $\beta_1(X)\neq 0$ if and only if $\deg(F)\geq 3$. In fact, by equation (\ref{betti2}), 
\[\beta_1(X)=h^{01}(X)+h^{10}(X)=2h^{01}(X)\]
(the last equality comes from Serre's duality). But $h^{01}(X)$ is the geometric genus of $X$, therefore it is different from $0$ if and only if $\deg(F)\geq 3$. So if $n=2$ we conclude by Theorem \ref{1}.\\
If $n>2$ we have $\beta_1(X)=0$ by the Lefschetz hyperplane theorem \cite[Theorem 3.1.17]{la}, therefore we conclude by Theorem \ref{1}.
\end{proof}

In light of the above proposition, it is natural the following question.

\begin{qs}
With the notation of Proposition \ref{1000}, if we consider the Segre embedding of $Z$ (and so $N=nm+n+m$), do the integers $\ara(I)$ and $\ara_h(I)$ depend only on $n$ ,$m$ and $\deg(F)$?
\end{qs}

\end{document}